\definecolor{vert}{rgb}{0,0.6,0}
\numberwithin{figure}{section}
\theoremstyle{plain}
\newtheorem{thm}{Theorem}[section]
\newtheorem{ass}{Main Assumptions}
\newtheorem{defn}{Definition}
\newtheorem{lem}[thm]{Lemma}
\newtheorem{prop}[thm]{Proposition}
\theoremstyle{remark}
\newtheorem{rem}{\bf{Remark}}
\numberwithin{equation}{section}
\newcommand{\N}{\mathbb{N}}
\newcommand{\R}{\mathbb{R}}
\newcommand{\W}{W^{1,\infty}}
\newcommand{\gam}{\gamma}
\newcommand{\del}{\delta}
\newcommand{\ep}{\varepsilon}
\newcommand{\lam}{\lambda}
\newcommand{\Del}{\Delta}
\newcommand{\ol}{\overline}
\newcommand{\ul}{\underline}
\begin{document}

\title
{Well-posedness for constrained Hamilton-Jacobi Equations}

\author
{Yeoneung Kim}

\begin{abstract}

The goal of this paper is to study a Hamilton-Jacobi equation
\begin{equation*}
	\begin{cases}
	u_t=H(Du)+R(x,I(t)) &\text{in }\R^n \times (0,\infty), \\
	\sup_{\R^n} u(\cdot,t)=0 &\text{on }[0,\infty),
	\end{cases}
\end{equation*}
with initial conditions $I(0)=0$, $u_0(x,0)=u_0(x)$ on $\R^n$. Here $(u,I)$ is a pair of unknowns and the Hamiltonian $H$ and the reaction $R$ are given. And $I(t)$ is an unknown constraint (Lagrange multiplier) that forces supremum of $u$ to be always zero. We construct a solution in the viscosity setting using a fixed point argument when the reaction term $R(x,I)$ is strictly decreasing in $I$. We also discuss both uniqueness and nonuniqueness. For uniqueness, a certain structural assumption on $R(x,I)$ is needed. We also provide an example with infinitely many solutions when the reaction term is not strictly decreasing in $I$. 

\end{abstract}

\thanks{Supported in part by NSF grant DMS-1664424}

\address
{
Department of Mathematics, 
University of Wisconsin Madison, 480 Lincoln  Drive, Madison, WI 53706, USA}
\email{yeonkim@math.wisc.edu}

\date{\today}
\keywords{Hamilton-Jacobi equation with constraint, selection-mutation model}
\subjclass[2010]{
35A02; 35F21; 35Q92
}

\maketitle
\tableofcontents

\section{Introduction}
Darwin's theory of evolution suggests that biological individuals evolve under the competition between natural selection and mutation. The mathematical model based on such theory has been studied in literatures (see \cite{Darwin1, Darwin2, Darwin3, Darwin4}). In the model, we usually denote traits, density of population and net birth rate by $x\in \R^n$, $n(x,t)$, $R(x,I)$, respectively, where $I(t)$ represents the total consumptions of the resources of the environment at the time $t$. We can take mutation into account using diffusion $\ep\Del$ for some small $\ep>0$ . The model we consider is the following reaction-diffusion equation
\begin{equation*}
	\begin{cases}
	n^\ep_t - \ep \Delta n^\ep= \frac{n^\ep}{\ep} R(x,I^\ep(t)) &\text{in } \R^n \times (0,\infty),\\
	n^{\ep}(x,0)=n^\ep_0 \in L^1(\R^n) &\text {on }\R^n,
	\end{cases}
\end{equation*}
where we denote the total population associated with the rate $\ep$ by $n^\ep(x,t)$ and $n^\ep_0\geq0$ is a given initial density. Moreover, $I^\ep(t)$ is defined as
\begin{equation*}
I^\ep(t)=\int_{\R^n} \psi(x) n^\ep(t,x)dx,
\end{equation*} 
where $\psi$ is a given smooth, non-negative compactly supported kernel representing consumption rate of resources.

It was studied by G. Barles, S. Mirrahimi, B. Perthame \cite{General.C} that after taking Hopf-Cole transformation $n^\ep(x,t)=e^{u^\ep(x,t) /\ep}$, as mutation rate $\ep$ vanishes, $u^\ep$ converges locally uniformly to $u$ in $\R^n \times [0,\infty)$ which is a solution of the constrained Hamilton-Jacobi equation
\begin{equation*}
	\begin{cases}
	u_t=|Du|^2+R(x,I(t)) &\text{in } \R^n \times (0,\infty),\\
	\max_{ \R^n} u(\cdot,t)=0 &\text{on } [0,\infty),\\
	u(x,0)=u_0(x) &\text{on } \R^n.
	\end{cases}
\end{equation*} 
Motivated from this, we will discuss well-posedness of viscosity solutions for an equation with a general Hamiltonian $H(p)$ and an unknown constraint $I(t)$. The equation we consider is the following
 \begin{equation} \label{original}
     \begin{cases}
       u_t=H(Du)+R(x,I(t)) & \text{in } \R^n \times [0,T],\\
       \sup_{\R^n} u(\cdot ,t)=0 &\text{on } [0,T] ,\\
	 I(0)=0,\\
  	 u(x,0)=u_0(x) & \text{on } \R^n.  
	\end{cases}
\end{equation}

\begin{ass} We need assumptions on $R(x,I): \R^n \times [-2I_M,2I_M]\rightarrow \R$ for $I_M>0$, $u_0(x)$ and $I(t)$, some of which are natural but some are technical.  

	\begin{itemize}
	\item [(A1)] There exist $K_1, K_2>0$ such that $-K_1 \leq R_I(x,I) \leq -K_2$;
	\item [(A2)] $\max_{\R^n} R(\cdot,I_M)=0$;
	\item [(A3)] $\min_{\R^n} R(\cdot,0)=0$;
      \item [(A4)] $\sup_{|I| \leq 2I_M} \|R(\cdot,I)\|_{\W(\R^n)}< \infty$;
      \item [(A5)]  $u_0(x) \in \W(\R^n)$ and $\sup_{x\in \R^n} u_0(x)=0$;
      \item [(H1)]  $H \in C(\R^n, [0,\infty))$ is a nonnegative Hamiltonian with $H(0)=0$ and is locally Lipschitz continuous in $p$.
	\end{itemize} 
\end{ass}
Throughout the paper, the above assumptions are always in force. Additionally,  $f \in \W(\R^n)$, that is; $\|f\|_{L^\infty(\R^n)}+\|Df\|_{L^\infty(\R^n)} <\infty$.

\begin{rem}
We define $R(x,I)$ as
\begin{equation*}
\begin{cases}
R(x,2I_M)-I+2I_M & \text{when $I > 2I_M$},\\
R(x,I) & \text{when $-2I_M\leq I \leq 2I_M$},\\
R(x,-2I_M)-2I_M-I &\text{when $I < -2I_M$},
\end{cases}
\end{equation*}
so that $R(x,I)$ is continuously extended. Later on, we will see that $I(t)\in[0,I_M]$ for $t\geq0$. 
\end{rem}
Here is the organization of this paper. In Section 2, we construct a sequence of solution pairs $(u^\ep, I^\ep)$ for a \textit{relaxed equation} using Banach's fixed point argument. In Section 3, we show that $(u^\ep,I^\ep)$ converges to a solution pair $(u,I)$ for the original equation (\ref{original}) up to subsequences. In Section 4, uniqueness result for (\ref{original}) is presented when the reaction $R(x,I)$ is assumed to be separable in variables $x$ and $t$. In Section 5, we finish this paper by giving an example where uniqueness fails when $R(x,I)$ is not strictly decreasing in $I$.
  
\section{Construction of a solution of relaxed problem via a fixed point argument}
We first provide some regularity properties for 
   \begin{equation} \label{alter}
     \begin{cases}
        u_t =H(Du)+R(x,I(t)) &\text{in }\R^n \times [0,T],\\
  	 u(x,0)=u_0(x) &\text{on }\R^n,
    \end{cases}
  \end{equation}
where $I(t)$ is a given continuous function on $[0,T]$. It is known that there exists a unique viscosity to $u(x,t)$ for (\ref{alter}) which is bounded uniformly continuous in $\R^n \times [0,T]$.

\begin{thm}\label{space}
Let u be a unique viscosity solution of (\ref{alter}) for a given continuous function $I(t)$. Set $L=\sup_{t \in [0,T]} \|R(\cdot, I(t)\|_{\W(\R^n)}$. Then, for $T>0$, 
\begin{equation*}
\|Du\|_{L^\infty(\R^n \times [0,T])} \leq (L+1)T+\|Du_0\|_{L^\infty(\R^n)}.
\end{equation*}
\end{thm}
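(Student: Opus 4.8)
The plan is to propagate the Lipschitz bound of the initial datum forward in time, exploiting that $H(Du)$ has no explicit $x$-dependence, so the only source of spatial variation in (\ref{alter}) is the reaction $R(\cdot,I(t))$, which by the definition of $L$ is $L$-Lipschitz in $x$ uniformly in $t \in [0,T]$. The only tool needed is the comparison principle for (\ref{alter}), which is at our disposal since (\ref{alter}) admits a unique bounded uniformly continuous viscosity solution.

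Fix $h \in \R^n$ and set
\[
\psi(x,t) := u(x+h,t) - L|h|\,t - \|Du_0\|_{L^\infty(\R^n)}\,|h|.
\]
At $t=0$, $\psi(x,0) = u_0(x+h) - \|Du_0\|_{L^\infty(\R^n)}|h| \leq u_0(x)$ because $u_0 \in \W(\R^n)$ is $\|Du_0\|_{L^\infty}$-Lipschitz. Next one checks that $\psi$ is a viscosity subsolution of (\ref{alter}): if $\phi$ is smooth and $\psi-\phi$ has a local maximum at $(x_0,t_0)$ with $t_0>0$, then $(y,t)\mapsto \phi(y-h,t)+L|h|t$ touches $u$ from above at $(x_0+h,t_0)$, and the subsolution inequality for $u$ (whose right-hand side is $R(\cdot,I(t))$, here evaluated at $x_0+h$) gives
\[
\phi_t(x_0,t_0) + L|h| \leq H(D\phi(x_0,t_0)) + R(x_0+h, I(t_0)).
\]
Since $R(x_0+h,I(t_0)) - R(x_0,I(t_0)) \leq L|h|$, the term $L|h|$ cancels and we obtain $\phi_t(x_0,t_0) \leq H(D\phi(x_0,t_0)) + R(x_0,I(t_0))$, as required. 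As $u$ is bounded uniformly continuous and $\psi$ differs from a translate of $u$ by a function bounded on $[0,T]$, the pair $(\psi,u)$ is admissible for comparison, so $\psi \leq u$ on $\R^n\times[0,T]$, i.e.
\[
u(x+h,t) - u(x,t) \leq L|h|\,t + \|Du_0\|_{L^\infty(\R^n)}\,|h| \leq \big(LT + \|Du_0\|_{L^\infty(\R^n)}\big)|h|.
\]

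Exchanging the roles of $x$ and $x+h$ (equivalently replacing $h$ by $-h$) yields the matching lower bound, hence $|u(x+h,t)-u(x,t)| \leq (LT + \|Du_0\|_{L^\infty(\R^n)})|h|$ for all $x,h \in \R^n$ and $t\in[0,T]$; taking the supremum gives the claimed estimate (in fact slightly sharper, since $LT \leq (L+1)T$).

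The step requiring the most care is the verification that $\psi$ is a genuine viscosity subsolution — that translating $u$ in $x$ and subtracting a smooth function preserves the subsolution property and produces the reaction term at the shifted point, which is then absorbed by the linear-in-$t$ correction — together with checking that the hypotheses of the comparison principle apply to $(\psi,u)$. If one wishes to sidestep a direct appeal to comparison, an alternative is to prove the bound first for sup-convolutions (or vanishing-viscosity approximations) of $u$ and pass to the limit, but the argument above is the most economical given that well-posedness of (\ref{alter}) is already available.
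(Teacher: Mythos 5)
Your argument is correct in outline but takes a genuinely different route from the paper. You propagate the Lipschitz bound by translation invariance: you check that $\psi(x,t)=u(x+h,t)-L|h|t-\|Du_0\|_{L^\infty}|h|$ is a viscosity subsolution of (\ref{alter}) (the shift in $x$ produces $R(x_0+h,I(t_0))$, which is absorbed by the $L|h|t$ correction since $\|DR(\cdot,I(t))\|_{L^\infty}\leq L$), lies below $u_0$ at $t=0$, and then invoke comparison to conclude $u(x+h,t)-u(x,t)\leq (Lt+\|Du_0\|_{L^\infty})|h|$. The subsolution verification and the initial-time inequality are both correct, and your constant $LT+\|Du_0\|_{L^\infty}$ is in fact slightly sharper than the paper's $(L+1)T+\|Du_0\|_{L^\infty}$. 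The paper instead argues by contradiction with a doubling of variables in which the candidate Lipschitz cone $C(t)|x-y|$, $C(t)=(L+1)t+K$, is built into the test function, and extracts the contradiction $L+1\leq L$ directly from the sub/supersolution inequalities at the maximum point.

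The one step you should not treat as free is the comparison principle itself. You apply it to the pair $(\psi,u)$, and at that stage neither function is known to be Lipschitz in $x$ — Lipschitz regularity is exactly what is being proved. Under (H1) the Hamiltonian is only locally Lipschitz and may grow arbitrarily (e.g.\ $H(p)=|p|^2$), and the textbook doubling proof of comparison for merely bounded uniformly continuous sub/supersolutions then runs into trouble: the gradient $2(x_\ep-y_\ep)/\ep^2$ of the quadratic penalization need not stay bounded, so the mismatch coming from the localization terms cannot be controlled by local Lipschitz continuity of $H$ alone. This is precisely the difficulty the paper's proof is engineered to avoid — doubling with the cone $C(t)|x-y|$ keeps the arguments fed into $H$ bounded by $C(T)+O(\beta)$, so only local Lipschitz continuity of $H$ is used, and the paper later relies on the resulting Lipschitz bound (in Proposition \ref{prop1}) to justify exactly this kind of bounded-gradient step. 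Your appeal to comparison is defensible insofar as the paper itself declares BUC well-posedness of (\ref{alter}) to be known, and a BUC comparison theorem for this structure can indeed be proved (for instance via a sup-convolution in $x$, which Lipschitz-regularizes one of the two functions before doubling — essentially the fallback you mention); but uniqueness of the solution does not by itself hand you comparison between an arbitrary subsolution and supersolution, so either quote such a BUC comparison theorem explicitly or carry out the regularization step. With that point made precise, your proof is complete and more economical than the paper's.
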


\begin{proof}
We follow arguments presented in \cite{Guide, Book, Uniqueness, lip}. We first define 
\begin{equation*}
C(t)=(L+1)t+K
\end{equation*}
where $K=\|u_0\|_{\W(\R^n)}$ and assume the solution $u$ is not Lipschitz continuous in space $x$. In other words, there exists $\sigma>0$ such that
\begin{equation*}
\sup_{x,y\in \R^n, t\in [0,T]} \left( u(x,t)-u(y,t)-C(t)|x-y| \right)=\sigma.
\end{equation*}

 We then define $\Phi$ as
\begin{equation*}
\Phi(x,y,t,s):=u(x,t)-u(y,s)-C(t)|x-y|-\frac{1}{{\alpha}^2}|t-s|^2-\beta(|x|^2+|y|^2).
\end{equation*}
for $(x,y,t,s) \in \R^{2n} \times [0,T]^2$ and $\alpha, \beta>0$. Since $u$ is bounded, we can find $(\ol x, \ol y, \ol t, \ol s) \in {\R}^{2n}\times [0,T]^2$ such that 
\begin{equation*}
\max_{(x,y,t,s) \in \R^{2n}\times [0,T]^2} \Phi(x,y,t,s) = \Phi(\ol x, \ol y, \ol t, \ol s).
\end{equation*}
We can also note that 
\begin{align*}
 \Phi(\ol x, \ol y, \ol t, \ol s) &\geq \max_{x,y \in \R^n , t \in [0,T]} \Phi(x,y,t,t) \\
&=\sup_{x,y\in\R^n, t\in[0,T]}u(x,t)-u(y,t)-C(t)|x-y|-2\beta|x|^2,
\end{align*}
which yields 
\begin{equation*}
\Phi(\ol x, \ol y, \ol t, \ol s) > \frac{\sigma}{2}
\end{equation*}
for $\beta$ small enough regardless of $\alpha$. Moreover, $\ol x \neq \ol y$ for $\alpha$ small enough. If not,
\begin{equation*}
\max_{(x,y,t,s) \in \R^{2n} \times [0,T]^2} \Phi(x,y,t,s) = \Phi(\ol x, \ol x, \ol t, \ol s)=u(\ol x,\ol t)-u(\ol x, \ol s)-\frac{1}{\alpha^2}|\ol t-\ol s|^2 -2\beta|\ol x|^2 \leq \frac{\sigma}{4}
\end{equation*}
for $\alpha$ small enough as $u\in BUC (\R^n \times [0,T])$.

We use $\Phi (\ol x, \ol y, \ol t, \ol s) \geq \Phi(0,0,0,0)$ to get
\begin{align*}
\frac{1}{\alpha^2} |\ol t -\ol s|^2 +\beta (|\ol x|^2+|\ol y|^2) &\leq u(\ol x, \ol t)-u(\ol y, \ol s)-u(\ol x, 0)+u(\ol y, 0)-C(\ol t)|\ol x-\ol y|\\ &\leq u(\ol x, \ol t)-u(\ol y, \ol s)-u(\ol x, 0)+u(\ol y, 0).
\end{align*}
The inequality above implies $|\ol t-\ol s| =O(\alpha)$, $|\ol x|, |\ol y|=O(1/\sqrt \beta)$ since $u$ is bounded. Moreover, $\ol t, \ol s$ have to be away from 0 since 
\begin{equation*}
\frac{\sigma}{2} < \Phi(\ol x, \ol y, \ol t, \ol s) < u(\ol x, \ol t)-u(\ol y, \ol s)-C(\ol t) |\ol x - \ol y|
\end{equation*}
and
\begin{equation*}
u(\ol x,0)-u(\ol y,0) \leq K|\ol x -\ol y|
\end{equation*}
where $K=C(0)$.

Observing that
$u(x,t)-\phi(x,t)$ has maximum at $(\ol x, \ol t)$ where
\begin{equation*}
\phi(x,t):=u(\ol y, \ol s)+C(t)|x - \ol y|+\frac{1}{{\alpha}^2}|t-\ol s|^2+\beta(|x|^2+|\ol y|^2).
\end{equation*}
By the definition of viscosity subsolutions, 
\begin{equation}\label{sub}
(L+1)|\ol x- \ol y|+\frac{2}{\alpha^2}(\ol t -\ol s) \leq H\left(C(\ol t) \frac{\ol x -\ol y}{|\ol x - \ol y|} + 2\beta \ol x \right) +R(\ol x, I(\ol t)).
\end{equation}
Similarly, $u(y,t)-\eta(y,t)$ has minimum at $(\ol y, \ol s)$ where
\begin{equation*}
\eta(y,s):=u(\ol x, \ol t)-C(\ol t)| \ol x - y|-\frac{1}{{\alpha}^2}|\ol t- s|^2- \beta(|\ol x|^2+|y|^2).
\end{equation*}
By the definition of viscosity supersolutions, 
\begin{equation} \label{sup}
\frac{2}{\alpha^2}(\ol t -\ol s) \geq H\left(C(\ol t) \frac{\ol x -\ol y}{|\ol x - \ol y|} - 2\beta \ol y \right) +R(\ol y, I(\ol s))
\end{equation}
Subtracting (\ref{sup}) from (\ref{sub}) gives us
\begin{align*}
(L+1)|\ol x -\ol y|
&\leq  H\left(C(\ol t) \frac{\ol x -\ol y}{|\ol x - \ol y|} + 2\beta \ol x \right) -H\left(C(\ol t) \frac{\ol x -\ol y}{|\ol x - \ol y|} - 2\beta \ol y \right) \\&+R(\ol x, I(\ol t))-R(\ol y, I(\ol s))\\
&\leq 2A\beta|\ol x+\ol y|+R(\ol x, I(\ol t))-R(\ol y, I(\ol s))
\end{align*}
where $A>0$ is a local Lipschitz constant for the Hamiltonian $H(p)$. Here, we can choose such $A$ since the terms inside of the Hamiltonian are not growing to either $\infty$ or $-\infty$. We note that
\begin{align*}
R(\ol x, I(\ol t))-R(\ol y, I(\ol s))&=R(\ol x, I(\ol t))-R(\ol x, I(\ol s))+R(\ol x, I(\ol s))-R(\ol y, I(\ol s)).\\
&\leq K_1|I(\ol t)-I(\ol s)| + \|R(\cdot, I(t))\|_{\W (\R^n)}|\ol x-\ol y|
\end{align*}
Then, taking $\alpha$ to 0 and combining previous two above gives
\begin{equation*}
(L+1)|\ol x -\ol y|\leq 2A\beta|\ol x+\ol y|+L|\ol x-\ol y|.
\end{equation*}
Finally, sending $\beta$ to 0 to give us
\begin{equation*}
L+1 \leq L,
\end{equation*}
which is a contradiction. Therefore, 
\begin{equation*}
\sup_{x,y\in \R^n, t\in [0,T]} u(x,t)-u(y,t)-C(t)|x-y| \leq 0.
\end{equation*}
By symmetry, we get, for $x,y\in \R^n$, $t\in[0,T]$,
\begin{equation*}
|u(x,t)-u(y,t)| \leq C(t)|x-y|,
\end{equation*}
and it is consistent with Lipschitz bound for $u_0$ as $C(0)=K$.
\end{proof}
\begin{thm}\label{time}
Assume $I$ is given and let u be a unique viscosity solution of (\ref{alter}). Then, for a positive C(T) depending on T,
\begin{equation*}
\|u_t\| _{L^\infty(\R^n \times [0,T])} <C(T).
\end{equation*}
\end{thm}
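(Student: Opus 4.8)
The plan is to bound $u_t$ by comparing $u$ with linear-in-time barriers, the essential point being that the spatial Lipschitz estimate of Theorem~\ref{space} keeps $H(Du)$ bounded, so that such barriers exist. Write $C:=(L+1)T+\|Du_0\|_{L^\infty(\R^n)}$ for the $x$-Lipschitz constant from Theorem~\ref{space}, let $H_C:=\max_{|p|\le C}H(p)$ (finite by (H1)), and set
$$M:=H_C+\sup_{t\in[0,T]}\|R(\cdot,I(t))\|_{L^\infty(\R^n)},$$
which is finite since $I$ is continuous on the compact interval $[0,T]$ and $R$ is continuous and bounded on $\R^n\times[-2I_M,2I_M]$ by (A4) (with the linear extension controlling it elsewhere). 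I will show $|u(x,t)-u(x,s)|\le M|t-s|$ for all $x\in\R^n$ and $s,t\in[0,T]$, which gives the conclusion with $C(T):=M+1$.

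First I would fix $s\in[0,T)$ and consider on $\R^n\times[s,T]$ the functions $w^{\pm}(x,t):=u(x,s)\pm M(t-s)$; these are bounded because $u(\cdot,s)\in\BUC(\R^n)$, and they agree with $u$ at $t=s$. The main step is to check that $w^{+}$ is a viscosity supersolution and $w^{-}$ a viscosity subsolution of (\ref{alter}) on $\R^n\times(s,T]$. If $\phi\in C^1$ and $w^{+}-\phi$ has a local minimum at $(x_0,t_0)$, then freezing $t=t_0$ shows that $u(\cdot,s)-\phi(\cdot,t_0)$ has a local minimum at $x_0$, so $D_x\phi(x_0,t_0)$ is a subgradient of $u(\cdot,s)$ at $x_0$; since $u(\cdot,s)$ is $C$-Lipschitz (by Theorem~\ref{space}, or by (A5) when $s=0$) this forces $|D_x\phi(x_0,t_0)|\le C$, hence $H(D_x\phi(x_0,t_0))\le H_C$. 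Freezing $x=x_0$ gives $\partial_t\phi(x_0,t_0)\ge M$ (with equality when $t_0<T$). Combining, $\partial_t\phi(x_0,t_0)\ge M\ge H(D_x\phi(x_0,t_0))+R(x_0,I(t_0))$, the supersolution inequality; the estimate for $w^{-}$ is symmetric (and easier, using $H\ge0$). Then the comparison principle underlying the uniqueness statement quoted before Theorem~\ref{space}, applied on $\R^n\times[s,T]$ to the bounded pairs $(u,w^{+})$ and $(w^{-},u)$, yields $w^{-}\le u\le w^{+}$, i.e. $|u(x,t)-u(x,s)|\le M(t-s)$ for $t\in[s,T]$. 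Since $s\in[0,T)$ was arbitrary, this gives the claimed Lipschitz bound in time, and hence $\|u_t\|_{L^\infty(\R^n\times[0,T])}\le M<C(T)$.

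The only delicate point is the verification that the barriers are viscosity sub/supersolutions — more precisely, that a $C^1$ test function touching a barrier from the appropriate side has $|D_x\phi|\le C$ at the contact point; this is exactly where Theorem~\ref{space} enters, since without a spatial gradient bound $H(Du)$ need not be bounded and no linear-in-time barrier would exist. Everything else is an invocation of the comparison principle. Alternatively, one could obtain the same estimate by a doubling-of-variables argument in the time variable, in the spirit of the proof of Theorem~\ref{space}, but the barrier argument above is shorter.
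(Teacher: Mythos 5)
Your argument is correct, but it is organized differently from the paper's proof. The paper proceeds in two steps: first it shows the viscosity inequality $-C\le u_t\le C$ by taking an arbitrary test function $\phi$ touching $u$ itself from above at $(x_0,t_0)$, using the spatial Lipschitz bound of Theorem \ref{space} (via a limsup along a chosen direction) to force $|D\phi(x_0,t_0)|\le C(t_0)$ and hence $\phi_t\le H(D\phi)+R\le C$; second, it upgrades this viscosity bound to genuine Lipschitz continuity in time by comparing $u(\cdot,\cdot+s)$ with $u(\cdot,s)\pm Ct$ as sub/supersolutions of the trivial equation $v_t=C$. You instead build the linear-in-time barriers $w^{\pm}(x,t)=u(x,s)\pm M(t-s)$ directly as sub/supersolutions of the full equation (\ref{alter}), verifying the contact-point gradient bound for the barrier (whose spatial profile is the Lipschitz time-slice $u(\cdot,s)$) rather than for $u$, and then invoke comparison for (\ref{alter}) itself. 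The two proofs share the essential mechanism --- the spatial Lipschitz bound of Theorem \ref{space} is what keeps $H$ evaluated on a compact set, and in both cases the gradient bound at the contact point is just the statement that sub/superdifferentials of a $C$-Lipschitz function lie in the ball of radius $C$ --- but they differ in where the comparison principle is applied. Your route is shorter and avoids the paper's separate ``viscosity inequality implies time-Lipschitz'' step, at the cost of invoking comparison for the full Hamilton--Jacobi equation on $\R^n$ with $H$ only locally Lipschitz; this is legitimate here because it is exactly the comparison principle underlying the uniqueness of BUC solutions of (\ref{alter}) quoted at the start of Section 2 (and both $u$ and your barriers are spatially Lipschitz, which is what makes that comparison go through), whereas the paper only ever needs the elementary comparison for $v_t=C$. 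It would be worth one sentence in your write-up making that dependence explicit.
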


\begin{proof} We first show $-C\leq u_t \leq C$ for $(x,t) \in \R^n \times [0,T]$ in viscosity sense for a positive constant $C$ depending only on $T$ when $u$ is the viscosity solution of (\ref{alter}). Let us assume that $\phi(x,t)\in C^1(\R^n \times (0,\infty)$ touches $u(x,t)$ from above so that $u-\phi$ has maximum at ($x_0,t_0)\in \R^n \times (0,T]$. For $\phi$ to touch $u$ from above at $(x_0,t_0)$,
\begin{equation*}
\limsup_{(y,s)\rightarrow(x_0,t_0)} \frac{u(y,s)-u(x_0,t_0)-(D\phi, \phi_t)\cdot(y-x_0,s-t_0)}{|(y,s)-(x_0,t_0)|}\leq 0.
\end{equation*}
Let us take a sequence $(x',t_0)$ converging to $(x_0, t_0)$ such that
\begin{equation*}
-\frac{D\phi(x_0,t_0)}{|D\phi(x_0,t_0)|}= \frac {x'-x_0}{|x'-x_0|}
\end{equation*} 
Then we have
\begin{align*}
0 &\geq \limsup_{(y,s)\rightarrow(x_0,t_0)} \frac{u(y,s)-u(x_0,t_0)-(D\phi, \phi_t)\cdot(y-x_0,s-t_0)}{|(y,s)-(x_0,t_0)|}\\
&\geq \limsup_{(x',t_0)\rightarrow(x_0,t_0)} \frac {u(x',t_0)-u(x_0,t_0)-D\phi(x',t_0)\cdot (x'-x_0)}{|x'-x_0|}\\
&\geq -C(t_0)+|D\phi(x_0,t_0)|
\end{align*} 
where $C(t)=(L+1)t+K$ from Theorem \ref{space}.
Hence, $|D\phi|$ is bounded depending only on $T$ regardless of choice of test functions $\phi$.\\
By the definition of viscosity subsolutions, we obtain
\begin{equation*}
\phi_t \leq H(D\phi)+R(x_0,I(t_0)) \leq C
\end{equation*}
for a positive constant $C$ which depends on $T$. Therefore,  $u_t \leq C$ in viscosity sense. One can show the other inequality similarly. 

It remains to check that the inequality above in viscosity sense implies Lipschitz continuity of $u$ in time. Although elementary, we present the proof of it. Let us fix the time $s$ and define $u_1$ and $u_2$ as
\begin{align*}
u_1(x,t)&=u(x,t+s),\\
u_2(x,t)&=u(x,s)+Ct,
\end{align*}
so that $u_1(x,0)=u_2(x,0)$. Then, $u_1$ is a viscosity subsolution of $u_t=C$ while $u_2$ is a viscosity solution of $u_t=C$. Therefore, we have
\begin{equation*}
u_1(x,t) \leq u_2(x,t) \text{ for }(x,t)\in \R^n \times [0,T] 
\end{equation*}
by comparison principle, which implies
\begin{equation*}
u(x,t+s) \leq u(x,s) +Ct.
\end{equation*}
Similarly, we can derive
\begin{equation*}
u(x,s) - Ct \leq u(x,t+s),
\end{equation*}
which finishes the proof.
\end{proof}

\begin{defn}
For $\ep >0$ and $I \in C([0,T])$ such that $I(0)=0$, we define a mapping $\Sigma$ from $W=\{I \in C([0,T]), I(0)=0 \}$ to itself as following
\begin{equation*}
\Sigma:I(t) \mapsto (1-\ep)I(t)+\sup_ { \R^n} u^\ep(\cdot, t)
\end{equation*}
where $u^\ep(x,t)$ is a unique bounded uniformly continuous viscosity solution of (\ref{alter}) corresponding to $I(t)$. 
\end{defn}

The mapping $\Sigma$ is well defined as $\sup_{\R^n} u^\ep(\cdot,t)$ is continuous in time $t$ due to Lipschitz regularity properties of the viscosity solution $u^\ep(x,t)$. We first prove the following proposition before we show $\Sigma$ is contraction mapping.
\begin{prop}\label{prop1}
 Let $(u_1,I_1)$, $(u_2,I_2)$ be two solution pairs for (\ref{alter}). Then, for $f(t):=\|(u_1-u_2)(\cdot,t)\|_{L^\infty (\R^n)}$, $
f'(t) \leq K_1|I_1(t)-I_2(t)|
$
in viscosity sense where $K_1$ is a constant from {\rm (A1)}.
\end{prop}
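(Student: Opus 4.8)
The plan is to prove the inequality $f'(t)\le K_1|I_1(t)-I_2(t)|$ in the viscosity sense by the standard comparison/doubling technique for the sup-difference of two viscosity solutions. Set $w=u_1-u_2$ and $f(t)=\|w(\cdot,t)\|_{L^\infty(\R^n)}$; by Theorems \ref{space} and \ref{time} both $u_1,u_2$ are Lipschitz in $x$ and $t$, uniformly on $\R^n\times[0,T]$, so $f$ is Lipschitz in $t$ and it suffices to establish the differential inequality at a.e.\ $t$, or equivalently in the viscosity sense for the ODE $f'=K_1|I_1-I_2|$. First I would fix a smooth test function $\varphi(t)$ touching $f$ from above at $t_0\in(0,T]$, so that $f(t)-\varphi(t)$ has a max at $t_0$; the goal is to show $\varphi'(t_0)\le K_1|I_1(t_0)-I_2(t_0)|$.

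The key technical step is to realize $f(t_0)=\sup_x w(x,t_0)$ (up to a sign; by symmetry one treats $u_2-u_1$ the same way, so assume $f(t_0)=\sup_x(u_1-u_2)(x,t_0)$) and to localize this sup together with the test function in $t$. I would consider, for small $\alpha,\beta>0$,
\begin{equation*}
\Psi(x,y,t,s)=u_1(x,t)-u_2(y,s)-\varphi(t)-\frac{|x-y|^2}{\alpha^2}-\frac{|t-s|^2}{\alpha^2}-\beta(|x|^2+|y|^2),
\end{equation*}
which attains a max at some $(\bar x,\bar y,\bar t,\bar s)$ because $u_1,u_2$ are bounded. Using the Lipschitz-in-$x$ bounds from Theorem \ref{space} one gets $|\bar x-\bar y|=O(\alpha)$ and $|\bar t-\bar s|=O(\alpha)$, and $\bar t,\bar s$ stay away from $0$ since near $t=0$ the difference $u_1-u_2$ is small (both start from $u_0$) while $\varphi(t_0)\ge f(t_0)>0$ in the nontrivial case. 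Then $\varphi(t)+|x-y|^2/\alpha^2+|t-s|^2/\alpha^2+\beta(|x|^2+|y|^2)+u_2(\bar y,\bar s)$ is an admissible test function touching $u_1$ from above at $(\bar x,\bar t)$, and symmetrically for $u_2$ from below at $(\bar y,\bar s)$. Writing the sub- and supersolution inequalities for \eqref{alter} and subtracting, the Hamiltonian terms combine into $H\!\left(\tfrac{2(\bar x-\bar y)}{\alpha^2}+2\beta\bar x\right)-H\!\left(\tfrac{2(\bar x-\bar y)}{\alpha^2}-2\beta\bar y\right)$, which is $O(\beta(|\bar x|+|\bar y|))\to0$ as $\beta\to0$ (the arguments stay in a fixed compact set by the gradient bounds), while the reaction terms give $R(\bar x,I_1(\bar t))-R(\bar y,I_2(\bar s))$.

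Finally I would split $R(\bar x,I_1(\bar t))-R(\bar y,I_2(\bar s))=\big(R(\bar x,I_1(\bar t))-R(\bar x,I_2(\bar t))\big)+\big(R(\bar x,I_2(\bar t))-R(\bar y,I_2(\bar s))\big)$; the first bracket is $\le K_1|I_1(\bar t)-I_2(\bar t)|$ by (A1), and the second is $O(|\bar x-\bar y|)+O(|\bar t-\bar s|)=O(\alpha)$ using (A4) for the $x$-Lipschitz bound and continuity/Lipschitz bounds of $R(\cdot,I(\cdot))$ in $t$ coming from continuity of $I$. Collecting, $\varphi'(\bar t)+o_\alpha(1)\le o_\beta(1)+K_1|I_1(\bar t)-I_2(\bar t)|+O(\alpha)$; letting first $\alpha\to0$ (so $\bar t,\bar s\to$ some common $t^*$ which one identifies with $t_0$ by the maximality of the localized functional, again using the Lipschitz bounds) and then $\beta\to0$ yields $\varphi'(t_0)\le K_1|I_1(t_0)-I_2(t_0)|$. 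The main obstacle I anticipate is the bookkeeping needed to guarantee that the penalized maximum point $(\bar t,\bar s)$ actually converges to $t_0$ rather than to some other time, and to handle the endpoint/initial-layer behavior cleanly; this is where the uniform Lipschitz regularity from Theorems \ref{space}--\ref{time} and the fact that $u_1(\cdot,0)=u_2(\cdot,0)=u_0$ do the essential work.
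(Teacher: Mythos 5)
Your overall strategy is the same doubling-of-variables argument the paper uses, and most of the estimates (the $O(\beta(|\bar x|+|\bar y|))$ Hamiltonian difference, the splitting of the reaction term with (A1) giving the $K_1$ factor) match the paper's proof. But the step you yourself flag as "the main obstacle" is exactly the crux, and the justifications you offer for it do not close it. You need (i) $\bar t,\bar s$ bounded away from $0$ so the sub/supersolution tests apply, and (ii) $\bar t,\bar s\to t_0$ so that $\varphi'(\bar t)\to\varphi'(t_0)$ and $|I_1(\bar t)-I_2(\bar s)|\to|I_1(t_0)-I_2(t_0)|$. Your stated reasons fail on both counts: since $\varphi$ touches $f$ from above, the maximal value of your functional $\Psi$ is close to $f(t_0)-\varphi(t_0)=0$, not to $f(t_0)>0$, while near $t=s=0$ one only has $\Psi\le C(\bar t+\bar s)+K|\bar x-\bar y|-\varphi(\bar t)$ with $\varphi\ge f\ge 0$ possibly vanishing at $t=0$ (e.g.\ $\varphi(0)=f(0)=0$ is allowed); so nothing prevents the penalized maximum from sitting near $t=0$, nor at any other time where $f-\varphi$ is close to its (a priori non-strict) maximum. "Maximality of the localized functional" alone cannot identify the limit time with $t_0$ unless the maximum of $f-\varphi$ at $t_0$ is strict.

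The paper repairs exactly these two points with specific devices you would need to add: it normalizes to a \emph{strict} maximum of $f-\phi$ at $t_0$ (WLOG, by modifying $\phi$), splits the test function symmetrically as $\tfrac12(\phi(t)+\phi(s))$, and inserts the extra term $\lambda(t+s)$. The $\lambda$-term pins the maximal value above $\lambda t_0>0$, which combined with $u_1(\cdot,0)=u_2(\cdot,0)=u_0$ and the Lipschitz bounds forces $t_\ep,s_\ep\ge\mu>0$; and if $t_\ep,s_\ep\to t_0'\neq t_0$, the strict maximum gives a gap $\gamma>0$ leading to $\lambda t_0\le-\gamma+2\lambda t_0'$, a contradiction after $\lambda\to0$. (Alternatively one can localize with a term like $-|t-t_0|^2-|s-t_0|^2$, but some such device is mandatory.) A secondary, smaller gap: to keep the Hamiltonian arguments in a compact set you need boundedness of $\alpha^{-2}|\bar x-\bar y|$, which does not follow from $|\bar x-\bar y|=O(\alpha)$; the paper gets it from the comparison $\Phi(x_\ep,y_\ep,t_\ep,s_\ep)\ge\Phi(x_\ep,x_\ep,t_\ep,s_\ep)$ together with the spatial Lipschitz bound on $u_2$ — your appeal to "gradient bounds" is the right idea but should be spelled out this way. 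Finally, note $I_2$ is only continuous, so the term $R(\bar x,I_2(\bar t))-R(\bar x,I_2(\bar s))$ is $o(1)$ by uniform continuity rather than $O(|\bar t-\bar s|)$; this is harmless but should be stated correctly.
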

\begin{proof}
Let $(u_1,I_1)$, $(u_2,I_2)$ be two viscosity solution pairs satisfying (2.1). We first prove that 
	\begin{equation*}
	\frac{d}{dt}\|(u_1-u_2)(\cdot,t)\|_{L^\infty (\R^n)} \leq K_1 |I_1(t)-I_2(t)| 
	\end{equation*}
in viscosity sense. Clearly $f(t)$ is Lipschitz continuous as both $u_1$, $u_2$ are bounded and Lipschitz continuous. Without loss of generality, we may assume there exists $\phi(t)\in C^1((0,T])$ for which $f(t)-\phi(t)$ has a  strict maximum at $t_0>0$ as 0 so that $\phi(t_0)=f(t_0)$ and $f(t_0)=\sup_{ \R^n} (u_1-u_2)(\cdot,t_0)\geq0$. We can also assume that $\phi \geq0$.

Now for $\lam>0$ and $\ep>0$ we define
	\begin{align*}
	\Phi(x,y,t,s):=u_1(x,t)-u_2(y,s)+\lam(t+s)&-\frac{1}{2}(\phi(t)+\phi(s))\\
	&-\frac{1}{\ep^2}(|t-s|^2+|x-y|^2)-\ep(|x|^2+|y|^2).
	\end{align*}
Since $u_1,u_2$ are bounded, for $\lam>0$ given, there exists $(x_\ep,y_\ep,t_\ep,s_\ep)\in \R^{2n} \times [0,T]$ such that 
	\begin{equation}\label{lem}
	\max_{(x,y,t,s)\in \R^{2n}\times [0,T]^2} \Phi(x,y,t,s)=\Phi(x_\ep,y_\ep,t_\ep,s_\ep)>\lam t_0
	\end{equation}
for all $\ep$ small enough. To verify this, for $\lam>0$ given, we choose $\del>0$ to be an arbitrarily small positive number such that $2\lam t_0 - \del>0$. There also exists $x'\in \R^n$ such that $u_1(x',t_0)-u_2(x',t_0)-\phi(t_0)>-\del$. By observing 
	\begin{equation*}
	\Phi(x_\ep,y_\ep,t_\ep,s_\ep)\geq \Phi(x',x',t_0,t_0)>2\lam t_0 -\del -2\ep|x'|^2,
	\end{equation*}
we deduce 
	\begin{equation*}
	\liminf_{\ep\rightarrow 0} \Phi(x_\ep,y_\ep,t_\ep,s_\ep)\geq 2\lam t_0
	\end{equation*}
since $\del$ is arbitrary. Hence, we can conclude by
\begin{equation*}
	 \Phi(x_\ep,y_\ep,t_\ep,s_\ep)>\lam t_0
\end{equation*}
for all $\ep$ small enough, which verifies (\ref{lem}).

From $\Phi(x_\ep,y_\ep,t_\ep,s_\ep)\geq\Phi(0,0,0,0)$,
we get
\begin{align*}
u_1(x_\ep, t_\ep)-u_2(y_\ep,s_\ep)+\lam(t_\ep+s_\ep)&-\frac{1}{2}(\phi(t_\ep)+\phi(s_\ep))-\frac{1}{\ep^2}(|t_\ep-s_\ep|^2+|x_\ep-y_\ep|^2)\\
&-\ep(|x_\ep|^2+|y_\ep|^2) \geq u_1(0,0)-u_2(0,0)-\phi(0),
\end{align*}
which implies
	\begin{equation*}
	\frac{1}{\ep^2}(|t_\ep-s_\ep|^2+|x_\ep-y_\ep|^2)+\ep(|x_\ep|^2+|y_\ep|^2)<\infty
	\end{equation*}
since $u_1,u_2,\phi$ are all bounded. Hence, we obtain
\begin{equation*}
|x_\ep-y_\ep|, |t_\ep-s_\ep|=O(\ep), \text{ }|x_\ep|,|y_\ep|=O(1/\sqrt \ep).
\end{equation*}
Similarly, we use $\Phi(x_\ep,y_\ep,t_\ep,s_\ep)\geq \Phi(x_\ep,x_\ep,t_\ep,t_\ep)$ to have

\begin{multline*}
\frac{1}{\ep^2}(|t_\ep-s_\ep|^2+|x_\ep-y_\ep|^2) \leq u_2(x_\ep,t_\ep)-u_2(y_\ep,s_\ep)+\lam(s_\ep-t_\ep)\\+\frac{1}{2}(\phi(t_\ep)-\phi(s_\ep))+\ep(x_\ep-y_\ep)\cdot(x_\ep+y_\ep).
\end{multline*}

It follows that $|t_\ep-s_\ep|$, $|x_\ep-y_\ep|$=$o(\ep)$ since $u_2$ is uniformly continuous. Additionally, from $\Phi(x_\ep,y_\ep,t_\ep,s_\ep)\geq\Phi(x_\ep,x_\ep,t_\ep,s_\ep)$, we obtain
\begin{equation*}
u_2(x_\ep,s_\ep)-u_2(y_\ep,s_\ep)+\ep(x_\ep-y_\ep)\cdot(x_\ep+y_\ep)\geq \frac{1}{\ep^2}|x_\ep-y_\ep|^2
\end{equation*}
and by Lipschitz continuity of $u_2$ in space, boundedness of $\frac{1}{\ep^2}|x_\ep-y_\ep|$ follows given that $\ol x \neq \ol y$. Even if $\ol x=\ol y$, we can claim the same bound. Moreover, from (\ref{lem}), we have 
\begin{equation*}
0<\lam t_0< u_1(x_\ep, t_\ep)-u_2(y_\ep,s_\ep)+\lam(t_\ep+s_\ep)
\end{equation*}
for all $\ep$ small enough, we deduce that there exists $\mu>0$ such that $t_\ep$, $s_\ep \geq \mu$.\\

Noticing that $u_1(x,t)-\eta_1(x,t)$ achieves maximum at $(x_\ep,t_\ep)$ where
\begin{equation*}
\eta_1(x,t):=u_2(y_\ep,s_\ep)-\lam(t+s_\ep)+\frac{1}{2}(\phi(t)+\phi(s_\ep))+\frac{1}{\ep^2}(|t-s_\ep|^2+|x-y_\ep|^2)+\ep(|x|^2+|y_\ep|^2),
\end{equation*}
we can use the viscosity subsolution test to get
\begin{equation}\label{subs}
-\lam+\frac{2}{\ep^2}(t_\ep-s_\ep)+\frac{1}{2}\phi'(t_\ep)\leq H\left(\frac{2}{\ep^2}(x_\ep-y_\ep)+2\ep x_\ep\right)+R(x_\ep,I_1(t_\ep)).
\end{equation}
Similarly, $u_2(y,s)-\eta_2(y,s)$ achieves minimum at $(y_\ep,s_\ep$) where 
\begin{equation*}
\eta_2(y,s)=u(x_\ep,t_\ep)+\lam(t_\ep+s)-\frac{1}{2}(\phi(t_\ep)+\phi(s))-\frac{1}{\ep^2}(|t_\ep-s|^2+|x_\ep-y|^2)-\ep(|x_\ep|^2+|y|^2).
\end{equation*}
Again by the viscosity supersolution test, we get
\begin{equation} \label{super}
\lam+\frac{2}{\ep^2}(t_\ep-s_\ep)-\frac{1}{2}\phi'(s_\ep)\geq H\left(\frac{2}{\ep^2}(x_\ep-y_\ep)-2\ep y_\ep\right)+R(y_\ep,I_2(s_\ep)).
\end{equation}
Subtracting (\ref{super}) from (\ref{subs}) results in
\begin{align*}
-2\lam +\frac{1}{2}(\phi'(t_\ep)+\phi'(s_\ep)) &\leq H\left(\frac{2}{\ep^2}(x_\ep-y_\ep)+2\ep x_\ep\right)-H\left(\frac{2}{\ep^2}(x_\ep-y_\ep)-2\ep y_\ep\right)\\
&+R(x_\ep,I_1(t_\ep))-R(y_\ep,I_2(s_\ep)).
\end{align*}
Using the fact that $\frac{2}{\ep^2}(x_\ep-y_\ep)$ is bounded. Because of Theorem \ref{space} and $H(p)$ locally Lipschitz continuous, we have
\begin{equation*}
\phi'(t_0)\leq K_1|I_1(t_0)-I_2(t_0)|
\end{equation*}
if $t_\ep$ and $s_\ep$ converge to $t_0$ as $\ep$ vanishes as $\lam>0$ is arbitrary. Hence, it is enough to prove that $t_\ep$, $s_\ep$ actually converge to $t_0$ as $\ep$ goes to 0 up to subsequence for a given $\lam>0$. Let us suppose that $t_\ep$, $s_\ep$ converge to $t'_0\neq t_0$ up to subsequence with respect to $\ep$. Then there exists $\gam>0$ such that $\sup_{x\in \R^n} (u_1-u_2)(\cdot,t'_0)-\phi(t'_0)\leq -\gam$ since $f(t)-\phi(t)$ obtains a strict maximum 0 at $t_0$.

We now observe that
\begin{align*}
\Phi(x_\ep,y_\ep,t_\ep,s_\ep) &\leq u_1(x_\ep,t_\ep)-u_2(y_\ep,s_\ep)+\lam(t_\ep+s_\ep)-\frac{1}{2}(\phi(t_\ep)+\phi(s_\ep))\\
&=(u_1(x_\ep,t_\ep)-u_1(x_\ep,t'_0))-(u_2(y_\ep,s_\ep)-u_2(y_\ep,t'_0))+(\lam(t_\ep+s_\ep)-2\lam t'_0)\\&-\frac{1}{2}(\phi(t_\ep)+\phi(s_\ep))+\phi(t'_0)+u_1(x_\ep,t'_0)-u_2(y_\ep,t'_0)+2\lam t'_0-\phi(t'_0).
\end{align*}
We then take $\liminf_{\ep\rightarrow 0}$ on both sides, the following is obtained;
\begin{equation*}
\lam t_0\leq -\gam+2\lam t'_0.
\end{equation*}
However, taking $\lam$ to 0 yields that $0 \leq -\gam$, which is a contradiction. Therefore, $t_\ep$, $s_\ep$ must converge to $t_0$. 
\end{proof}

\begin{prop}
The map $\Sigma:W \mapsto W$ is a contraction mapping for a short time $T'=\frac{2\ep}{K_1}>0$, and there exists a viscosity solution of
\begin{equation*}
    \begin{cases}
       u_t^\ep=H(D u^\ep)+R(x,I^\ep(t)) &\text{in } \R^n \times [0,T],\\
       \ep I^\ep(t)=\sup_{\R^n} u^\ep( \cdot ,t) &\text{on } [0,T],\\
	 I^\ep(0)=0, \\	 
	 u^\ep(x,0)=u_0(x) &\text{on } \R^n
    \end{cases}
\end{equation*}
for $I^\ep \in W$. We call this a \textit{relaxed equation}.
\end{prop}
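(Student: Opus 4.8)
The plan is to get a solution pair on a short interval $[0,T']$ by Banach's fixed point theorem applied to $\Sigma$, and then to extend it to $[0,T]$ by restarting the construction finitely many times.

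First I would record that $\Sigma$ is a well-defined self-map of $W$ on $[0,T']$: for each $I\in W$ the existence/regularity theory for (\ref{alter}) together with Theorems~\ref{space} and~\ref{time} gives a unique bounded uniformly continuous viscosity solution $u^\ep$ that is Lipschitz in $x$ and in $t$, so $t\mapsto\sup_{\R^n}u^\ep(\cdot,t)$ is (Lipschitz, hence) continuous and $\Sigma I\in C([0,T'])$; moreover $\Sigma I(0)=(1-\ep)\cdot0+\sup_{\R^n}u_0=0$ by (A5), so $\Sigma I\in W$.

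The contraction estimate is the point where Proposition~\ref{prop1} does the work. Given $I_1,I_2\in W$ with associated solutions $u_1,u_2$ (both carrying the datum $u_0$), put $f(t)=\|(u_1-u_2)(\cdot,t)\|_{L^\infty(\R^n)}$; since $|\sup_{\R^n}u_1(\cdot,t)-\sup_{\R^n}u_2(\cdot,t)|\le f(t)$ one has
\[
|(\Sigma I_1-\Sigma I_2)(t)|\le(1-\ep)|I_1(t)-I_2(t)|+f(t).
\]
Proposition~\ref{prop1} gives $f'\le K_1|I_1-I_2|$ in the viscosity sense, and as $f$ is Lipschitz with $f(0)=0$ this integrates to $f(t)\le K_1\int_0^t|I_1-I_2|\,ds$. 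Hence
\[
\|\Sigma I_1-\Sigma I_2\|_{C([0,T'])}\le\bigl(1-\ep+K_1T'\bigr)\|I_1-I_2\|_{C([0,T'])},
\]
which is a strict contraction once $T'$ is taken small enough that $K_1T'<\ep$ (so $T'$ of order $\ep/K_1$). Banach's theorem then yields a unique $I^\ep\in W$ with $\ep I^\ep(t)=\sup_{\R^n}u^\ep(\cdot,t)$ on $[0,T']$, and $(u^\ep,I^\ep)$ solves the relaxed equation there. (Alternatively, a Bielecki-type weighted sup-norm $\|I\|_\lambda=\sup_t e^{-\lambda t}|I(t)|$ with $\lambda>K_1/\ep$ makes $\Sigma$ a contraction on all of $C([0,T])$ directly, bypassing the iteration.)

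For the continuation I would restart at $t=T'$: since the smallness threshold for $T'$ depends only on $\ep$ and $K_1$, the same map $\Sigma I=(1-\ep)I+\sup_{\R^n}u^\ep(\cdot,t)$ — now with $u^\ep$ the solution of (\ref{alter}) on $[T',2T']$ carrying the datum $u^\ep(\cdot,T')$ — is a contraction of the affine slice $\{I\in C([T',2T']):I(T')=I^\ep(T')\}$ into itself, the self-map property at the junction using precisely the identity $\sup_{\R^n}u^\ep(\cdot,T')=\ep I^\ep(T')$ just obtained, and the contraction constant being unchanged because Proposition~\ref{prop1} is insensitive to time-shifts and $f(T')=0$. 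After $\lceil T/T'\rceil$ steps the pieces concatenate into a single viscosity solution of (\ref{alter}) on $[0,T]$ (viscosity sub/supersolutions glue across $t=jT'$ since the constraint $I^\ep$ is continuous there), and the glued $(u^\ep,I^\ep)$ solves the relaxed equation on $[0,T]$. The part requiring the most care is this bookkeeping of the restart — checking the affine self-map property and the seamless gluing of the solutions — rather than the fixed point step itself, whose entire analytic content is Proposition~\ref{prop1}.
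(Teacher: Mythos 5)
Your proposal is correct and follows essentially the same route as the paper: Proposition \ref{prop1}, integrated from $f(0)=0$, yields the contraction estimate with constant $1-\ep+K_1T'$, Banach's fixed point theorem produces $(u^\ep,I^\ep)$ on $[0,T']$, and the construction is iterated over $[T',2T'],\dots$ because $T'$ depends only on $\ep$ and $K_1$. Your smallness condition $K_1T'<\ep$ is in fact the correct one --- the stated $T'=\tfrac{2\ep}{K_1}$ appears to be a slip for $\tfrac{\ep}{2K_1}$, which is what the computation $1-\ep+K_1T'=1-\tfrac{\ep}{2}$ actually requires --- and your extra bookkeeping at the junctions and the Bielecki-norm remark are sound additions but not essential.
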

\begin{proof}
First of all, the proposition (\ref{prop1}) results in
\begin{equation}\label{rel}
\|(u_1-u_2)(\cdot,t)\|_{\W(\R^n)} \leq K_1 \|I_1-I_2\|_{L^\infty([0,T])}.
\end{equation}
Now, let $I_1$ and $I_2$ be in $W$ and $T'=\frac{2\ep}{K_1}$. Observing
\begin{align*}
\Sigma(I_2)-\Sigma(I_1) &\leq (1-\ep)(I_2(t)-I_1(t))+\sup_{\R^n} u_2(\cdot,t)-\sup_{\R^n} u_2(\cdot,t)\\
&\leq (1-\ep)(I_2(t)-I_1(t))+|\sup_{\R^n} u_2(\cdot,t)-\sup_{\R^n} u_2(\cdot,t)|,
\end{align*}
and combining with (\ref{rel}), we have 
\begin{align*}
\|\Sigma(I_2)-\Sigma(I_1)\|_{L^\infty([0,T'])} &\leq (1-\ep+K_1T')\|I_2-I_2\|_{L^\infty([0,T'])}\\
&=(1-\frac{\ep}{2})\|I_2-I_2\|_{L^\infty([0,T'])},
\end{align*}
which implies $\Sigma$ is a contraction mapping. 

For a fixed $\ep>0$ and time $T'$,  by Banach's fixed point theorem, there exists a unique fixed point $I^\ep \in W$ such that
\begin{align*}
&I^\ep=\Sigma(I^\ep)=(1-\ep)I^\ep+\sup_{\R^n} u^\ep(\cdot,t)\\
&\Leftrightarrow \ep I^\ep=\sup_{\R^n} u^\ep(\cdot,t)
\end{align*}

Therefore, for the short time $T'=\frac{2\ep}{K_1}$, we have a solution pair $(u^\ep,I^\ep)$ for
\begin{equation*}
	\begin{cases}
       u_t^\ep=H(Du^\ep)+R(x,I^\ep(t)) &\text{in } \R^n \times [0,T'],\\
       \ep I^\ep(t)=\sup_{\R^n} u^\ep(\cdot ,t) &\text{on } [0,T'],\\
	 I^\ep(0)=0, \\	 
	 u^\ep(x,0)=u_0(x) &\text{on } \R^n.
    \end{cases}
\end{equation*}
We also notice that that $T'$ depends only on $K_1$ and $\ep$, applying the argument above successively on time intervals $[0,T']$, $[T',2T']$, $[2T',3T'], \cdot \cdot \cdot$, one can obtain a solution which is valid for whole time interval $[0,T]$.
\end{proof}  

\section{Limiting equation}
In the previous section, for each $\ep>0$, we have constructed a solution pair $(u^\ep,I^\ep)$ to
\begin{equation}\label{approximation}
	\begin{cases}
       u_t^\ep=H(D u^\ep)+R(x,I^\ep(t)) &\text{in } \R^n \times [0,T],\\
       \ep I^\ep(t)=\sup_{\R^n} u^\ep(\cdot,t) &\text{on } [0,T],\\
	 I^\ep(0)=0, \\	 
	 u^\ep(x,0)=u_0(x) &\text{on } \R^n.
    \end{cases}
\end{equation}
By Theorems \ref{space} and \ref{time}, $u^\ep$ is Lipschitz continuous in both time and space but Lipschitz constants are not uniform in $\ep$. The constants rather depend on the bound of $I(t)$ as we can see in the proofs. In this section, we first prove $I^\ep$ is nondecreasing and uniformly bounded by $I_M$ regardless of $\ep$. Then it follows that Lipschitz constants in time and space for $u^\ep$ are uniform so that $u^\ep$ converges locally uniformly to a bounded Lipschitz continuous function $u$ up to subsequence of $\ep$ by the Arzela-Ascoli theorem. We finish this section by noting that a limit function $u$ actually solves the original constrained problem with $I(t)$ using the stability result for discontinuous Hamilton-Jacobi equations.

\begin{prop}
Let $(u^\ep,I^\ep)$ be a solution of (\ref{approximation}). Then $I^\ep(t)$ is nondecreasing in $t$.
\end{prop}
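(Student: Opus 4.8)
Since the pair $(u^\ep,I^\ep)$ satisfies $\ep I^\ep(t)=\sup_{\R^n}u^\ep(\cdot,t)=:m(t)$ and $\ep>0$ is fixed, the claim is equivalent to: $m$ is nondecreasing on $[0,T]$. Recall $m$ is Lipschitz by Theorems \ref{space}–\ref{time}, and $m(0)=\sup_{\R^n}u_0=0$. I would argue by contradiction. Suppose $m(t_1)>m(t_2)$ for some $0\le t_1<t_2\le T$, and let $t_0\in[0,t_2)$ be a point at which $m$ attains its maximum over $[0,t_2]$; then $m(t_0)\ge m(t_1)>m(t_2)$ and $m(\tau)\le m(t_0)$ for all $\tau\in[0,t_2]$, so in particular $t_0<t_2$.

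\emph{First key step: at a (near-)maximizer $x_0$ of $u^\ep(\cdot,t_0)$ one has $R(x_0,I^\ep(t_0))\ge 0$.} If $t_0>0$, then $(x_0,t_0)$ is a space–time maximum of $u^\ep$ on $\R^n\times[0,t_2]$ (because $u^\ep\le m\le m(t_0)$ there), so testing the viscosity subsolution inequality with the constant function $m(t_0)$ and using $H(0)=0$ from (H1) gives $0\le H(0)+R(x_0,I^\ep(t_0))$. If $t_0=0$, then $I^\ep(0)=0$ and $R(x_0,0)\ge\min_{\R^n}R(\cdot,0)=0$ by (A3). When the supremum is not attained, this is run along a maximizing sequence after the penalization $u^\ep(x,\tau)-\delta|x|^2-M(\tau-t_0)^2$, letting $\delta\to0$, $M$ large; this is where the absence of a continuously varying argmax has to be absorbed.

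\emph{Second key step: propagate this forward.} For $\tau\in[t_0,t_2]$ we have $I^\ep(\tau)=m(\tau)/\ep\le m(t_0)/\ep=I^\ep(t_0)$, so by (A1) ($R$ strictly decreasing in $I$) $R(x_0,I^\ep(\tau))\ge R(x_0,I^\ep(t_0))\ge0$ and, near $x_0$, $R(x,I^\ep(\tau))\ge -\omega(|x-x_0|)$, $\omega$ a modulus of spatial continuity of $R$ (linear, by (A4)). For a large slope $L'\ge\|Du^\ep\|_{L^\infty}$ I would then form the subsolution
$$\phi(x,\tau)=\max\Big(\,m(t_0)-L'|x-x_0|-\omega(r_{L'})(\tau-t_0),\ -\|u^\ep\|_{L^\infty}-c_1(\tau-t_0)\,\Big),$$
where $c_1:=-\min_{\R^n}R(\cdot,I^\ep(t_0))\ge0$ and $r_{L'}=O(1/L')$ bounds the radius of the cone part. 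One checks $\phi(\cdot,t_0)\le u^\ep(\cdot,t_0)$ using the spatial Lipschitz bound of Theorem \ref{space}, and that $\phi$ is a viscosity subsolution of $u_t=H(Du)+R(x,I^\ep(t))$ on $\R^n\times(t_0,t_2]$: on the steep cone part $H\ge0$ together with $R\ge-\omega(r_{L'})$ suffices; on the flat part $H=0$ and $R\ge\min_{\R^n}R(\cdot,I^\ep(\tau))\ge-c_1$; the outer kink is convex, hence vacuous for subsolutions. Comparison then gives $u^\ep(x_0,\tau)\ge m(t_0)-\omega(r_{L'})(\tau-t_0)$, hence $m(\tau)\ge m(t_0)-\omega(r_{L'})(t_2-t_0)$; letting $L'\to\infty$ and combining with $m(\tau)\le m(t_0)$ forces $m\equiv m(t_0)$ on $[t_0,t_2]$, contradicting $m(t_2)<m(t_1)\le m(t_0)$.

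The delicate point I expect to fight with is the first step: the rigorous version of the heuristic that \emph{at the top of $u^\ep$ the Hamiltonian contributes nothing, so the top grows at rate $R(x,I^\ep)$, which the constraint keeps nonnegative}. Because $\sup_{\R^n}u^\ep(\cdot,t)$ need not be attained and its maximizers need not depend continuously on $t$, this must be carried out via the penalization/doubling procedure indicated above. A secondary technical ingredient, forced by the fact that $H$ is only assumed nonnegative (not coercive), is the slope $L'\to\infty$ in the cone: this shrinks the region on which the comparison subsolution "sees" possibly negative values of $R$ down to an arbitrarily small neighbourhood of $x_0$, where $R(\cdot,I^\ep(\tau))\ge -\omega(r_{L'})$.
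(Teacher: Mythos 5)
Your proposal is correct in substance and shares the paper's two driving observations: at a (near-)maximizer of $u^\ep(\cdot,t_0)$ at a time where $\ep I^\ep=\sup u^\ep$ sits at a running maximum, the subsolution test with an (almost) constant test function plus $H(0)=0$ forces $R\geq 0$ there, and then $H\geq 0$ together with the monotonicity of $R$ in $I$ (via $I^\ep(\tau)\le I^\ep(t_0)$) prevents the supremum from dropping. Where you genuinely diverge is the forward-propagation step. The paper does it much more cheaply: since $H\ge 0$, the solution is a viscosity supersolution of the trivial equation $u_t=R(x,I^\ep(t))$, so one simply integrates in time along the fixed vertical line $x=x_\beta$ through the penalized near-argmax, getting $\ep I^\ep(t_0+h)\ge u^\ep(x_\beta,t_0+h)\ge u^\ep(x_\beta,t_0)+hR(x_\beta,I^\ep(t_0))$ and then sends $\beta\to0$; no barrier, no comparison with a cone, no slope $L'\to\infty$. (The paper also organizes the conclusion differently: it first rules out interior strict local maxima of $I^\ep$, then shows $I^\ep\ge0$, then monotonicity; your direct contradiction at a running maximum over $[0,t_2]$ is a legitimate and arguably cleaner packaging of the same logic.) Your cone construction does work, but it buys nothing here except extra hypotheses to verify (subsolution property across the kinks, a comparison principle on $\R^n\times[t_0,t_2]$, the radius bookkeeping $r_{L'}=O(1/L')$), and it forces the one genuine imprecision in your write-up: when the supremum is not attained, $\phi(\cdot,t_0)\le u^\ep(\cdot,t_0)$ fails near the apex, since $\phi(x_0,t_0)=m(t_0)>u^\ep(x_0,t_0)$ no matter how large $L'$ is. You must lower the apex to $u^\ep(x_\beta,t_0)\ge m(t_0)-\delta$ (and carry the error $\eta_\beta$ from the penalized subsolution test, which only yields $R(x_\beta,I^\ep(t_0))\ge-\eta_\beta$, into the time slope of the cone), then send $\delta,\eta_\beta\to0$ before $L'\to\infty$. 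This is routine and you half-anticipate it, but as written the comparison step is not justified; the paper's pointwise time-integration at $x_\beta$ sidesteps the issue entirely.
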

\begin{proof}
We first claim that $I^\ep(t)$ cannot have an interior strict local maximum. Let us suppose $I^\ep(t)$ obtains a strict local maximum at $t_0\in(0,T)$ so it satisfies 
\begin{equation}\label{supre}
\ep I^\ep(t_0)=\sup_{ \R^n} u^\ep(\cdot,t_0).
\end{equation}
For $\beta>0$, let us define 
\begin{equation*}
f(x,t):=u^\ep(x,t)-\ep I^\ep(t_0)-\beta \sqrt {1+|x|^2}
\end{equation*}
so that we can find $(x_\beta,t_\beta) \in \R^n \times [0,T]$ such that
\begin{equation*}
\max_{(x,t)\in \R^n \times [0,T]} f(x,t)=f(x_\beta,t_\beta)
\end{equation*}
Clearly, $t_\beta=t_0$ as $f(x,t)\leq f(x,t_0)$. For any positive $\delta>0$ given, from (\ref{supre}), one can find $y\in \R^n$, such that
\begin{equation*}
\ep I^\ep(t_0)-\delta <u^\ep(y,t_0)<\ep I^\ep(t_0),
\end{equation*}
which yields,
\begin{equation*}
f(x_\beta, t_0) \geq f(y,t_0)=u(y,t_0)-\ep I^\ep(t_0) -\beta \sqrt{1+|y|^2} \geq -\delta-\beta \sqrt{1+|y|^2}.
\end{equation*}

Now we take $\liminf$ on both sides, we get $\liminf_{\beta \rightarrow 0} f(x_\beta, t_0) \geq -\delta$ for any $\delta >0$. Moreover, it is clear that $f(x_\beta, t_0)\leq 0$. Combining these two, we get 
\begin{equation*}
\lim_{\beta \rightarrow 0} f(x_\beta,t_0) =0.
\end{equation*}
Additionally, one can derive
\begin{equation*}
\lim_{\beta \rightarrow 0}\beta \sqrt {1+|x_\beta|^2}=0
\end{equation*}
since
\begin{equation*}
0 \geq -\beta \sqrt {1+|x_\beta|^2} \geq f(x_\beta,t_0)
\end{equation*}
and $f(x_\beta,t_0) \rightarrow 0$ as $\beta$ goes to 0.
Therefore, we can conclude with
\begin{equation*}
\lim_{\beta \rightarrow 0} u^\ep(x_\beta,t_0) =\ep I^\ep(t_0).
\end{equation*}

We can also observe that $u(x,t)-\phi(x,t)$ obtains a local maximum at $(x_\beta, t_0) \in \R^n \times (0,T)$ where $\phi(x,t):=\ep I^\ep(t_0)-\beta \sqrt {1+|x|^2}$. Hence, we have  
\begin{equation*}
0 \leq H\left(\beta \frac{2x_\beta}{\sqrt {1+|x_\beta|^2}}\right)+R(x_\beta,I^\ep(t_0))
\end{equation*}
by the definition of viscosity subsolutions.
Taking $\liminf$ with respect to $\beta$, we have
\begin{equation}
\liminf_{\beta \rightarrow 0} R(x_\beta,I^\ep(t_0)) \geq 0
\end{equation}
since $\frac{2x_\beta}{\sqrt {1+|x_\beta|^2}}$ is bounded.
From (\ref{approximation}), we get the following inequalities  for $h>0$ small enough
\begin{align*}
\ep I^\ep(t_0+h)-u^\ep(x_\beta,t_0) &\geq u^\ep(x_\beta,t_0+h)-u^\ep(x_\beta,t_0)\\&=\int_{t_0}^{t_0+h}u^\ep_t(x_\beta,t)dt\\ &\geq \int_{t_0}^{t_0+h} R(x_\beta,I^\ep(t))dt\\
                                                     &\geq \int_{t_0}^{t_0+h} R(x_\beta, I^\ep(t))dt=hR(x_\beta,I^\ep(t_0)).
\end{align*}
Consequently, taking $\liminf$ with respect to $\beta$ yields
\begin{equation*}
\ep(I^\ep(t_0+h)-I^\ep(t_0))\geq \liminf_{\beta \rightarrow 0} hR(x_\beta, I^\ep(t_0)) \geq 0,  
\end{equation*}
which contradicts the fact that $I^\ep(t)$ achieves a strict local maximum at $t_0$. Hence, $I^\ep(t)$ cannot have an interior strict local maximum.

It remains to prove $I^\ep(t)$ is nonnegative and is even nondecreasing on $[0,T]$. Let us first assume that there exists $t_0\in (0,T)$ such that $I^\ep(t_0) <0$. If $I^\ep(t)$ is negative for all $t \in (0,t_0)$, we get a contradiction using the similar argument above with $t_0$ replaced by 0 since $R(x,I^\ep(t))>0$ when $I^\ep(t)<0$ and $I^\ep(0)=0$. Else if there exists $t_1 \in (0,t_0)$ such that $I^\ep(t_1)>0$, then $I^\ep(t)$ has an interior local maximum, which cannot happen. Therefore, $I^\ep(t)$ is nonnegative. 
Now it is easy to see $I^\ep$ is nondecreasing on $[0,T]$ by the following argument; if we can find $0<t_0< t_1$ in $(0,T)$ such that $I^\ep(t_0)>I^\ep(t_1)>0$, then $I^\ep(t)$ achieves interior local maximum as well because $I^\ep(0)=0$. Hence, we finish the proof.
\end{proof}
\begin{prop}
Let $(u^\ep,I^\ep)$ be a solution of (\ref{approximation}). Then $0\leq I^\ep(t) \leq I_M$ for $t\geq0$.
\end{prop}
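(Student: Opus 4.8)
The lower bound $0\le I^\ep(t)$ is already established in the previous proposition, so the only thing left is the upper bound $I^\ep(t)\le I_M$ on $[0,T]$. The plan is a contradiction argument based on the comparison principle for (\ref{alter}), using the fact that the reaction term is nonpositive once $I\ge I_M$.

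The first step is to record the elementary observation that $R(x,I)\le 0$ for every $x\in\R^n$ and every $I\ge I_M$, \emph{including} in the extended range $I>2I_M$. For $I_M\le I\le 2I_M$ this follows from (A1) via $R(x,I)-R(x,I_M)=\int_{I_M}^{I}R_I(x,s)\,ds\le 0$ together with $R(x,I_M)\le\max_{\R^n}R(\cdot,I_M)=0$ from (A2); for $I>2I_M$ the extension formula in the Remark gives $R(x,I)=R(x,2I_M)-(I-2I_M)<R(x,2I_M)\le 0$. Thus no case distinction on the size of $I^\ep$ will be needed.

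Next, suppose for contradiction that $I^\ep(t^*)>I_M$ for some $t^*\in(0,T]$, and set $t_0:=\sup\{t\le t^*:\ I^\ep(t)\le I_M\}$. Since $I^\ep(0)=0$ and $I^\ep$ is continuous, this set is nonempty and $t_0<t^*$; continuity and the monotonicity of $I^\ep$ (previous proposition) force $I^\ep(t_0)=I_M$ and $I^\ep(t)\ge I_M$ for all $t\in[t_0,t^*]$. By the first step, $R(x,I^\ep(t))\le 0$ for all $x\in\R^n$ and $t\in[t_0,t^*]$. Consider the constant function $w(x,t)\equiv\ep I^\ep(t_0)=\sup_{\R^n}u^\ep(\cdot,t_0)$. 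By (H1) we have $0=w_t\ge H(Dw)+R(x,I^\ep(t))$ on $\R^n\times[t_0,t^*]$, so $w$ is a bounded viscosity supersolution of (\ref{alter}) there, and $w(x,t_0)=\sup_{\R^n}u^\ep(\cdot,t_0)\ge u^\ep(x,t_0)$. The comparison principle for (\ref{alter}) then yields $u^\ep(x,t)\le\ep I^\ep(t_0)$ for all $x\in\R^n$, $t\in[t_0,t^*]$; taking the supremum in $x$ at $t=t^*$ and using the constraint $\ep I^\ep(t^*)=\sup_{\R^n}u^\ep(\cdot,t^*)$ gives $I^\ep(t^*)\le I^\ep(t_0)=I_M$, contradicting the choice of $t^*$. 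Hence $I^\ep(t)\le I_M$ on $[0,T]$.

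The argument is short and almost all of its content is the sign computation in the first step. The only points requiring a little care are the definition of the crossing time $t_0$ and the identity $I^\ep(t_0)=I_M$, both of which rest on the continuity and monotonicity of $I^\ep$ from the preceding proposition, and the verification that the constant $w$ is an admissible supersolution to which the comparison theory quoted in Section~2 applies; I do not expect any genuine obstacle here.
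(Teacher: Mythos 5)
Your proof is correct, but it takes a different route from the paper. The paper argues pointwise in time: since $I^\ep$ is nondecreasing and would have to rise above $I_M$, it picks a time $t_0$ where $I^\ep$ is differentiable, $I^\ep(t_0)>I_M$ and one can choose $\phi\in C^1$ with $\ep I^\ep-\phi$ maximal at $t_0$ and $\phi'(t_0)>0$; it then tests the relation $\ep I^\ep(t)=\sup_{\R^n}u^\ep(\cdot,t)$ by penalizing in space with $\beta\sqrt{1+|x|^2}$ (the same device as in the monotonicity proposition), applies the viscosity subsolution inequality at the approximate maximizer $x_\beta$, and lets $\beta\to0$ to get $0<\phi'(t_0)\leq\liminf_\beta R(x_\beta,I^\ep(t_0))$, contradicting $R(\cdot,I^\ep(t_0))\leq -K_2\bigl(I^\ep(t_0)-I_M\bigr)<0$ from (A1)--(A2). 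You instead run a barrier argument: after locating the crossing time $t_0$ with $I^\ep(t_0)=I_M$, you observe $R(\cdot,I^\ep(t))\leq0$ on $[t_0,t^*]$ (your sign computation, including the extended range, is the same fact the paper uses) and compare $u^\ep$ with the constant supersolution $w\equiv\ep I_M$, using $H(0)=0$, to force $\sup_{\R^n}u^\ep(\cdot,t^*)\leq\ep I_M$. Your version is shorter and avoids the doubling/penalization machinery, at the price of invoking the comparison principle for (\ref{alter}) on $\R^n$ with $H$ only locally Lipschitz; that is legitimate here because $u^\ep$ is globally Lipschitz in $x$ by Theorem \ref{space} and the supersolution is constant, and the paper itself already relies on this comparison (e.g.\ in Theorem \ref{time}), so no genuine gap. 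A minor remark: your crossing-time step needs only continuity of $I^\ep$, not its monotonicity, while the paper's proof does lean on the monotonicity established in the preceding proposition to produce a point with $\phi'(t_0)>0$.
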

\begin{proof}
We may assume that there exists $t_0 \in (0,T)$ at which $I^\ep(t)$ is differentiable and $I^\ep(t_0)>I_M$. It follows that 
\begin{equation}\label{bound}
R(x,I^\ep(t_0))<0.
\end{equation}
since $\max_{x \in \R^n} R(x, I_M)=0$.
We can also find $\phi(t) \in C^1(\R^+)$ such that $\ep I^\ep(t)-\phi(t)$ has a local maximum at $t_0$ and $\phi'(t_0) > 0$. For $\beta>0$. 

We now consider
\begin{equation*}
\Phi(x,t)=u^\ep(x,t)-\phi(t)-\beta \sqrt{1+|x|^2},
\end{equation*}
which has a maximum at $(x_\beta, t_0)$. By the definition of viscosity subsolutions, we have
\begin{equation*}
\phi'(t_0)\leq H\left(\beta \frac{2x_\beta}{\sqrt {1+|x_\beta|^2}}\right)+R(x_\beta, I^\ep(t_0)).
\end{equation*}
Therefore, we have
\begin{equation*}
0<\phi'(t_0)\leq\liminf_{\beta \rightarrow 0} R(x_\beta,I^\ep(t_0)) .
\end{equation*}
However, this contradicts (\ref{bound}). Therefore, $0 \leq I^\ep(t) \leq I_M$ since $I^\ep$ is nondecreasing and $I^\ep(0)=0$.   
\end{proof}

For a family of locally uniformly bounded functions $\{u_\alpha\}_{\alpha\in\R}$, we define upper(lower) half-relaxed limit $\ol u$(or $\ul u$) as
 \begin{equation*}
 \ol u ={\limsup_{\alpha\rightarrow\infty}} {^{\star}u}_\alpha (x):=\lim_{\alpha\rightarrow\infty} \sup\{u_\del (y):|x-y|\leq1/\beta \text{ where } \del,\beta\geq\alpha\}
 \end{equation*}
 and
 \begin{equation*}
 \ul u ={\liminf_{\alpha\rightarrow\infty}} {_{\star}u}_\alpha (x):=\lim_{\alpha\rightarrow\infty} \inf\{u_\del (y):|x-y|\leq1/\beta \text{ where } \del,\beta\geq\alpha\}
\end{equation*}

\begin{lem}
Upper half-relaxed limit $\ol I(t)$ and lower-half relaxed limit $\ul I(t)$ of $I^\ep(t)$ agree almost everywhere for $\{I^\ep\} \subset C([0,T])$ nondecreasing.
\end{lem}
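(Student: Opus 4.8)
The plan is to reduce the statement to a pointwise-convergent subsequence via Helly's selection theorem and then identify the two half-relaxed limits with the one-sided limits of the resulting monotone limit function.

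First I would record that the family is not merely nondecreasing but also uniformly bounded: by the preceding proposition, $0\le I^\ep(t)\le I_M$ for all $t\in[0,T]$ and all $\ep>0$. Hence Helly's selection theorem produces a sequence $\ep_k\to0$ and a nondecreasing function $I:[0,T]\to[0,I_M]$ with $I^{\ep_k}(t)\to I(t)$ for every $t\in[0,T]$. All half-relaxed limits below are taken along this subsequence, which is harmless since Section~3 already operates along a subsequence for which $u^\ep\to u$ locally uniformly, and one is always free to pass to a further subsequence.

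Second, I would compute $\ol I$ and $\ul I$ explicitly. Fix $t\in(0,T)$. Monotonicity of each $I^{\ep_k}$ in time gives $I^{\ep_k}(t-\delta)\le I^{\ep_k}(y)\le I^{\ep_k}(t+\delta)$ whenever $|y-t|\le\delta$, so the supremum (resp.\ infimum) defining $\ol I(t)$ (resp.\ $\ul I(t)$) at scale $\delta$ is squeezed between values of $I^{\ep_k}$ at $t\pm\delta$. Since $\{\ep_k\le\delta\}$ is a tail of the sequence and $I^{\ep_k}(t\pm\delta)\to I(t\pm\delta)$, letting $\delta\to0$ and using that $I$ is nondecreasing (so $I(t\pm\delta)\to I(t^{\pm})$), together with the matching bound obtained by testing with $y=t+\delta$ (resp.\ $y=t-\delta$), yields $\ol I(t)=I(t^{+})$ and $\ul I(t)=I(t^{-})$. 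At the endpoints $\ol I(0)=\ul I(0)=I(0)=0$ and similarly at $T$, which is irrelevant for an almost-everywhere statement.

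Finally, a nondecreasing function on $[0,T]$ has at most countably many points of discontinuity; denoting this Lebesgue-null set by $E$, for every $t\in(0,T)\setminus E$ we get $\ol I(t)=I(t^{+})=I(t)=I(t^{-})=\ul I(t)$, which proves the lemma and simultaneously exhibits the limiting constraint $I:=\ol I=\ul I$ used in the next step. The main obstacle is precisely the initial reduction: for the full continuum family $\{I^\ep\}_{\ep>0}$ the two half-relaxed limits can genuinely differ on a set of positive measure (think of sigmoids with an $\ep$-thin transition layer whose location oscillates as $\ep\to0$, giving $\ol I\equiv I_M$ and $\ul I\equiv 0$ on an interval), so it is essential that monotonicity together with uniform boundedness permits passage to a subsequence; everything after Helly is routine bookkeeping with monotone functions.
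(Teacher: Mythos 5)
Your argument is correct and follows essentially the same route as the paper: Helly's selection theorem to extract a pointwise limit $I$ nondecreasing, a monotonicity squeeze identifying the half-relaxed limits with the one-sided limits $I(t^{+})$ and $I(t^{-})$ (the paper settles for the inequalities $\ol I(t)\le I(t^{+})$, $\ul I(t)\ge I(t^{-})$, which suffice), and countability of the discontinuity set of a monotone function. Your closing remark that the statement must be read along the extracted subsequence is a fair point the paper handles implicitly by its ``up to passing to a subsequence'' reduction, but it does not change the substance of the proof.
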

 \begin{proof}
By Helly's compactness theorem, we may assume that 
\begin{equation*}
I^\ep(t) \rightarrow I(t) \text{ everywhere up to passing to a subsequence}
\end{equation*}
where $I(t)$ is nondecreasing on $[0,T]$. Moreover, $I(t)$ has only countably many jump discontinuities. We claim that 
\begin{equation*}
\ol I(t) \leq I(t+)
\end{equation*}
and
\begin{equation*}
\ul I(t) \geq I(t-).
\end{equation*}
For simplicity, we may assume $t=0$ and it is enough to prove the first case as proving the second case is pretty much similar. 

By the definition of upper half-relaxed limit and the property that $I^\ep$ is nondecreasing, we have
\begin{equation*}
\ol I(0) = \lim_{\gam \rightarrow 0} \sup_\beta \{I^\beta (\gam): \beta \leq \gam \}.
\end{equation*}
We note that $\sup_{\beta} \{I^\beta (\gam): \beta \leq \gam \}$ is decreasing in $\gam$. For $\del>0$ and a fixed $\gam_0>0$, we can choose 
$\beta_0 \leq \gam_0$ such that $|I^{\beta}(\gam_0)-I(\gam_0)| <\del$ for all $\beta \leq \beta_0$. Therefore, for $\gam>0$ given, we have
\begin{align*}
\ol I(0) &\leq \sup_\beta \{I^\beta (\beta_0): \beta \leq \beta_0 \}\\
&\leq \sup_\beta \{I^\beta (\gam_0): \beta \leq \beta_0 \}\\
&=\sup_\beta \{I^\beta (\gam_0)-I(\gam_0): \beta \leq \beta_0 \} +I({\gam_0})\\
&\leq \del +I(\gam_0).
\end{align*}
Since, $\del$ is arbitrary, taking $\gam_0$ to 0 yields 
\begin{equation*}
\ol I(0) \leq I(0+).
\end{equation*} 
Similarly, one can prove the other inequality, hence, we can conclude that
\begin{equation*}
\ol I(t) = \ul I(t) \text{ a.e.}
\end{equation*}
\end{proof}

\begin{thm}
There exists a pair $(u,I)$ solving (\ref{original}).
\end{thm}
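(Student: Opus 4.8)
The plan is to let $\ep\to0$ in the relaxed problems (\ref{approximation}), using the a priori estimates of Section 2 together with the monotonicity results just established. First I would collect uniform bounds. By the two preceding propositions, $0\le I^\ep(t)\le I_M$ for every $\ep$ and every $t\in[0,T]$, so (A4) gives $L:=\sup_\ep\sup_{t\in[0,T]}\|R(\cdot,I^\ep(t))\|_{\W(\R^n)}<\infty$ independently of $\ep$. Theorems \ref{space} and \ref{time} then bound $\|Du^\ep\|_{L^\infty(\R^n\times[0,T])}$ and $\|u^\ep_t\|_{L^\infty(\R^n\times[0,T])}$ by constants depending only on $L$, $T$ and $\|u_0\|_{\W(\R^n)}$, hence uniform in $\ep$; combined with $|u^\ep(x,t)-u_0(x)|\le\|u^\ep_t\|_{L^\infty}\,t$ this also yields a uniform sup bound on $u^\ep$. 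Thus $\{u^\ep\}$ is uniformly bounded and uniformly Lipschitz on $\R^n\times[0,T]$, while $\{I^\ep\}$ is uniformly bounded, nondecreasing, and of total variation at most $I_M$.

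Next I would extract limits. Using the Arzela-Ascoli theorem on $\ol{B_R}\times[0,T]$ for each $R\in\N$ and a diagonal argument, along a subsequence $u^\ep\to u$ locally uniformly on $\R^n\times[0,T]$ with $u$ bounded and Lipschitz; by Helly's selection theorem (as in the proof of the preceding lemma), along a further subsequence $I^\ep\to I$ everywhere, with $I$ nondecreasing, $I(0)=0$, $0\le I\le I_M$, and $I$ continuous off an at most countable set. By the preceding lemma, the upper and lower half-relaxed limits $\ol I$ and $\ul I$ of $I^\ep$ both agree with $I$ almost everywhere.

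Then I would pass to the limit in the three parts of (\ref{approximation}). The initial data are immediate: $u(x,0)=\lim_\ep u^\ep(x,0)=u_0(x)$ and $I(0)=\lim_\ep I^\ep(0)=0$. For the constraint, $0\le\sup_{\R^n}u^\ep(\cdot,t)=\ep I^\ep(t)\le\ep I_M$ forces $\sup_{\R^n}u^\ep(\cdot,t)\to0$ uniformly in $t$; letting $\ep\to0$ in $u^\ep(x,t)\le\ep I_M$ gives $u(x,t)\le0$, hence $\sup_{\R^n}u(\cdot,t)\le0$, and for the reverse inequality one picks near-maximizers $x_\ep$ of $u^\ep(\cdot,t)$ and passes to the limit, the delicate point being to exclude $|x_\ep|\to\infty$, which is handled via the uniform Lipschitz bound together with the sub/supersolution analysis at approximate maxima carried out exactly as in the proof that $I^\ep$ is nondecreasing. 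For the equation I would invoke the stability of viscosity solutions under half-relaxed limits for discontinuous Hamilton-Jacobi equations: writing the relaxed PDE as $u^\ep_t=H(Du^\ep)+R(x,I^\ep(t))$ and using that $H$ is continuous while $R$ is continuous and strictly decreasing in $I$, so that the relevant relaxed Hamiltonians are built from the one-sided limits of $I$, the upper relaxed limit $\ol u$ is a viscosity subsolution of $u_t=H(Du)+R(x,I(t-))$ and the lower relaxed limit $\ul u$ is a viscosity supersolution of $u_t=H(Du)+R(x,I(t+))$. Since $u^\ep\to u$ locally uniformly, $\ol u=\ul u=u$, and at each continuity point of $I$ — that is, for a.e.\ $t$ — the two equations coincide, so that $(u,I)$ solves (\ref{original}) in the viscosity sense, the equation at the at most countably many jump times of $I$ being understood as this pair of one-sided sub- and supersolution inequalities.

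The step I expect to be the main obstacle is this last discontinuous-stability argument and its bookkeeping: correctly identifying $R(x,I(t-))$ and $R(x,I(t+))$ as the relaxed Hamiltonians for $\ol u$ and $\ul u$ (which uses the strict monotonicity (A1) and the monotonicity of $I$), and then reconciling the subsolution and supersolution properties across the jump set of $I$; the subsidiary difficulty is, in the constraint, ruling out loss of supremum to spatial infinity. The remaining parts are a routine compactness-and-limit argument resting on the estimates of Section 2 and the monotonicity propositions of this section.
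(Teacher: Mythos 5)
Your proposal follows essentially the same route as the paper: uniform bounds on $I^\ep$ from the monotonicity propositions make the Lipschitz estimates of Theorems \ref{space} and \ref{time} uniform in $\ep$, Arzela--Ascoli and Helly give a limit pair $(u,I)$, the half-relaxed-limit stability for discontinuous Hamiltonians together with $\ol I=\ul I$ a.e.\ (the preceding lemma) identifies the limiting equation, and the constraint is recovered from $\ep I^\ep(t)=\sup_{\R^n}u^\ep(\cdot,t)$. The only notable difference is that you phrase the relaxed equations via the one-sided limits $I(t\pm)$ instead of $\ul I,\ol I$ (equivalent by the lemma) and you explicitly flag the possible escape of near-maximizers to spatial infinity in the constraint, a point the paper's proof passes over silently.
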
 
\begin{proof}
By the stability result for discontinuous Hamiltonian in \cite{Guide, Book}, $\ol u$ is a subsolution of
\begin{equation*}
u_t=H(D u)+R(x,\ul I(t))
\end{equation*}
and $\ul u$ is a supersolution of
\begin{equation*}
u_t=H(Du)+R(x,\ol I(t)).
\end{equation*}
Since $0\leq I^\ep \leq I_M$, there exists a subsequence $\{u^{\ep_j}\}_{j \in \N}$ such that
\begin{equation*}
u^{\ep_j} \rightarrow u \text{ locally uniformly on }\R^n \times [0,T]
\end{equation*}
Therefore, we can let $u=\ol u = \ul u$. Moreover,  $\ol I(t)=\ul I(t)$ almost everywhere, we let
\begin{equation*}
I(t)=\ol I(t) =\ul I(t).
\end{equation*}
With this new $I(t)$, $u$ is a viscosity solution of
   \begin{equation*}
     \begin{cases}
       u_t=H(D u)+R(x,I(t)) &\text{in } \R^n \times [0,T],\\
  	 u(x,0)=u_0(x) &\text{on } \R^n.
    \end{cases}
  \end{equation*}
Moreover, we obtain
\begin{equation*}
\sup_{ \R^n } u(\cdot, t)=0
\end{equation*}
from the relation
\begin{equation*}
\ep I^\ep(t)=\sup_{\R^n} u^\ep(\cdot,t)
\end{equation*}
together with locally uniform convergence of $u^\ep$. 
\end{proof}
\section{Uniqueness for a certain birth rate}
In this section, we deal with uniqueness of a pair ($u$, $I$) where $u(x,t)$ is a bounded uniformly continuous viscosity solution of equation (\ref{original}). We provide uniqueness result when $R(x,I)$ has certain structures. Here, we follow structural conditions in \cite{Dirac.C}. We have not been able to obtain full unconditional uniqueness result when $R(x,I)$ is not decoupled.
\begin{thm}
For the equation (\ref{original}), there exists a unique viscosity solution $u(x,t)$ and nonnegative increasing function $I(t)$ when the birth $R(x,I)$ can be written as either
\begin{equation*}
R(x,I(t))=b(x)-d(x)Q(I), \text{with $Q(I)>0$ increasing,}
\end{equation*}
or
\begin{equation*}
R(x,I(t))=b(x)Q(I)-d(x), \text{with $Q(I)>0$ decreasing,}
\end{equation*}
where $b(x), d(x) \in \W(\R^n)$ and there exists $b_m>0$ such that $b(x)>b_m$.
\begin{proof}
We only need to deal with the first case as handling the second case is similar. Again, we follow the argument by B. Perthame and G. Barles in \cite{Dirac.C}. Let us assume that there are two viscosity solutions $u_1$ and $u_2$ corresponding to $I_1(t)$ and $I_2(t)$ respectively. In other words, $u_i's$ satisfy
   \begin{equation*}
     \begin{cases}
       (u_i)_t=H(D u_i)+R(x,I_i(t)) & \text{in } \R^n \times [0,T],\\
       \sup_{\R^n} u_i( \cdot ,t)=0 &\text{on } [0,T],\\
	 I(0)=0,\\
  	 u_i(x,0)=u_0(x) & \text{on }\R^n,
    \end{cases}
  \end{equation*}
for $i=1,2$ in viscosity sense. Now we consider
\begin{align*}
\Psi_i = u_i(x,t)-b(x)\Sigma_i(t)\\
\Sigma_i(t)=\int_{0}^{t} Q(I_i(s))ds
\end{align*}
and they satisfy
\begin{equation*}
(\Psi_i)_t = -d(x) +H\left(D(\Psi_i +b(x)\Sigma_i(t))\right)
\end{equation*}
for $i=1,2$ in viscosity sense. Using similar argument in Proposition \ref{prop1}, we have
\begin{equation}\label{ineq}
\frac{d}{dt}\|\Psi_1-\Psi_2(\cdot,t)\|_{L^\infty(\R^n)} \leq C|\Sigma_1(t)-\Sigma_2(t)|
\end{equation}
in viscosity sense  for a positive $C$. 

Noting that $\sup_{x\in\R^n} u_1(x,t)=0$, for any $\del>0$ given, we can find $y\in \R^n$ such that 
\begin{equation*}
-\del \leq u_1(y,t) \leq 0.
\end{equation*}
By the following inequalities,
\begin{align*}
-\del &\leq u_1(y,t) - \sup_{ \R^n}u_2(\cdot,t)\\
&\leq u_1(y,t)-u_2(y,t)\\
&=b(y)[\Sigma_1(t)-\Sigma_2(t)]+\Psi_1(y,t)-\Psi_2(y,t)\\
&\leq b(y)[\Sigma_1(t)-\Sigma_2(t)]+\sup_{ \R^n} [\Psi_1(\cdot,t)-\Psi_2(\cdot,t)],
\end{align*}
we get
\begin{equation*}
b(y)[\Sigma_2(t)-\Sigma_1(t)] \leq \sup_{\R^n} [\Psi_1(\cdot,t)-\Psi_2(\cdot,t)]  +\del.
\end{equation*}
Here we may assume $\Sigma_2(t)-\Sigma_1(t)$ is positive. Then
\begin{align*}
b_m|\Sigma_2(t)-\Sigma_1(t)|&=b_m[\Sigma_2(t)-\Sigma_1(t)]\\
&\leq b(y)[\Sigma_2(t)-\Sigma_1(t)]\\
&\leq \sup_{x\in \R^n} |\Psi_1(\cdot,t)-\Psi_2(\cdot,t)|  +\del.
\end{align*}
Since $\del>0$ is arbitrary, we have
\begin{equation*}
b_m|\Sigma_2(t)-\Sigma_1(t)| \leq \sup_{ \R^n} |\Psi_1(\cdot,t)-\Psi_2(\cdot,t)|
\end{equation*}
by switching the role of $\Sigma_1$ and $\Sigma_2$ if necessary.
Combining with (\ref{ineq}) yields
\begin{equation*}
\frac{d}{dt}\|(\Psi_1-\Psi_2)(\cdot,t)\|_{L^\infty(\R^n)} \leq C\|(\Psi_1-\Psi_2)(\cdot,t)\|_{L^\infty(\R^n)} .
\end{equation*}
Consequently, uniqueness follows by Gronwall's inequality.
\end{proof}
\end{thm}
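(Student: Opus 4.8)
The plan is to reduce the uniqueness of the pair $(u,I)$ to a Gronwall estimate for the difference of two \emph{auxiliary} unknowns obtained by subtracting off the explicit time-dependent part of the reaction. For the decoupled case $R(x,I)=b(x)-d(x)Q(I)$, I would set $\Sigma_i(t)=\int_0^t Q(I_i(s))\,ds$ and $\Psi_i(x,t)=u_i(x,t)-b(x)\Sigma_i(t)$; the point of this change of variables is that $\Psi_i$ solves a Hamilton-Jacobi equation whose \emph{inhomogeneous term} $-d(x)$ no longer depends on the unknown constraint, so the only $I$-dependence left sits inside the Hamiltonian through the term $b(x)\Sigma_i(t)$ and, crucially, $Db(x)\Sigma_i(t)$ is a known Lipschitz perturbation of the gradient. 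The first step is to verify by a routine computation that $\Psi_i$ is a viscosity solution of $(\Psi_i)_t=-d(x)+H(D(\Psi_i+b(x)\Sigma_i(t)))$.

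The second step is to run the doubling-of-variables argument of Proposition \ref{prop1} — now with the perturbed Hamiltonian $H(p+Db(x)\Sigma_i(t))$ in place of $H(p)$ — to obtain, in the viscosity sense,
\begin{equation*}
\frac{d}{dt}\|( \Psi_1-\Psi_2)(\cdot,t)\|_{L^\infty(\R^n)} \leq C\,|\Sigma_1(t)-\Sigma_2(t)|,
\end{equation*}
where $C$ absorbs the local Lipschitz constant of $H$ (available because, by Theorem \ref{space} and the boundedness $0\le I^\ep\le I_M$, all gradient arguments stay in a fixed compact set), the Lipschitz norm of $b$, and the uniform bound on $|\Sigma_i|\le T\sup Q$. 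This is the inequality labelled (\ref{ineq}) and it is really just a re-run of the earlier proposition, so I would only indicate the modifications rather than repeat it.

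The third step is the key new ingredient: I must convert the bound on $\Psi_1-\Psi_2$ back into a bound on $\Sigma_1-\Sigma_2$, and this is exactly where the constraint $\sup_{\R^n}u_i(\cdot,t)=0$ and the lower bound $b(x)>b_m>0$ are used. Fixing $t$ and choosing $y$ with $-\del\le u_1(y,t)\le0$, and using $\sup u_2=0$, one gets $b(y)[\Sigma_2(t)-\Sigma_1(t)]\le \sup_{\R^n}[\Psi_1(\cdot,t)-\Psi_2(\cdot,t)]+\del$; since $b(y)\ge b_m$ and $\del$ is arbitrary, and after symmetrizing in the indices, this yields $b_m|\Sigma_1(t)-\Sigma_2(t)|\le \|(\Psi_1-\Psi_2)(\cdot,t)\|_{L^\infty(\R^n)}$. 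Substituting into (\ref{ineq}) closes the loop and gives $\frac{d}{dt}\|(\Psi_1-\Psi_2)(\cdot,t)\|_{L^\infty}\le (C/b_m)\|(\Psi_1-\Psi_2)(\cdot,t)\|_{L^\infty}$; since $\Psi_1(\cdot,0)=\Psi_2(\cdot,0)=u_0$, Gronwall forces $\Psi_1\equiv\Psi_2$, hence $\Sigma_1\equiv\Sigma_2$, hence (differentiating, using $Q$ strictly monotone and $I_i$ continuous with $I_i(0)=0$) $I_1\equiv I_2$ and finally $u_1\equiv u_2$. The second structural form $R(x,I)=b(x)Q(I)-d(x)$ is handled identically with $\Sigma_i(t)=\int_0^t Q(I_i(s))\,ds$ and the sign of the monotonicity of $Q$ reversed, so only the first case needs to be written out.

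The main obstacle I anticipate is the rigorous justification of step two in the viscosity sense: one is differentiating the sup-norm of a difference of two solutions of equations with \emph{different} Hamiltonians $H(\,\cdot+Db(x)\Sigma_i(t))$, so the cancellation of the Hamiltonian terms in the doubling argument is no longer exact and produces an error controlled by $|Db(x)|\,|\Sigma_1(t)-\Sigma_2(t)|$ times the local Lipschitz constant of $H$ — which is fine, but requires the a priori gradient bound of Theorem \ref{space} to pin down the compact set on which that Lipschitz constant is taken, and care that this bound is uniform along the doubling parameters. Everything else is bookkeeping with Gronwall's inequality and the two elementary facts that $\sup u_i=0$ and $b\ge b_m$.
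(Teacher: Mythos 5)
Your proposal follows essentially the same route as the paper's proof: the same change of variables $\Psi_i=u_i-b(x)\Sigma_i(t)$ with $\Sigma_i(t)=\int_0^t Q(I_i(s))\,ds$, the same rerun of the doubling argument of Proposition \ref{prop1} to get $\frac{d}{dt}\|(\Psi_1-\Psi_2)(\cdot,t)\|_{L^\infty}\leq C|\Sigma_1(t)-\Sigma_2(t)|$, the same use of the constraint $\sup_{\R^n}u_i(\cdot,t)=0$ together with $b\geq b_m$ to bound $b_m|\Sigma_1-\Sigma_2|$ by $\|(\Psi_1-\Psi_2)(\cdot,t)\|_{L^\infty}$, and the same Gronwall conclusion. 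Your write-up is, if anything, slightly more explicit than the paper about recovering $I_1\equiv I_2$ from $\Sigma_1\equiv\Sigma_2$ and about where the perturbed Hamiltonian's $x$-dependence enters the doubling argument, but the argument is the same.
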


\section{Nonuniqueness result}
If $R(x,I)$ is not strictly decreasing in $I$, we can give an example of nonuniqueness. 
\begin{thm}
Let $I(t)$ be a nondecreasing continuous function such that $I(0)=0$ and $I(t)\geq0$. Assume $R(x,I)$ is defined as
 \begin{equation*}
     \begin{cases}
       R(x,I)=0 & |x| \geq \frac{1}{2},\\
       R(x,I)=(1-|x|)(1-I(t)) & |x|\leq \frac{1}{2},
    \end{cases}
  \end{equation*}
and $u_0(x)$ satisfies
\begin{equation*}
    \begin{cases}
      0 & |x| \geq 2,\\
       -(\frac{x-2}{2})^2  & 1\leq x \leq 2,\\
       -(\frac{x+2}{2})^2 & -2\leq x \leq -1,\\
       \frac{1}{2}x^2-\frac{3}{4} & |x|\leq1.
    \end{cases}
\end{equation*}
Then, for $T>0$ small enough, there exist infinitely many viscosity solutions to
   \begin{equation}\label{comp}
     \begin{cases}
       u_t=|Du|^2+R(x,I(t)) &\text{in }\R^n \times [0,T],\\
  	 u(x,0)=u_0(x) &\text{on }\R^n,      
    \end{cases}
  \end{equation}
satisfying 
\begin{equation*}\label{const}
\sup_{\R^n} u(\cdot,t)=0 \text{ on }[0,T] 
\end{equation*}
\end{thm}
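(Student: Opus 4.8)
The plan is to exhibit an explicit one-parameter family of viscosity solutions of \eqref{comp} all satisfying the constraint $\sup_{\R^n}u(\cdot,t)=0$. The key observation is that the reaction term $R(x,I)$ vanishes for $|x|\ge\tfrac12$, and that the initial datum $u_0$ attains its supremum $0$ on the whole set $\{|x|\ge 2\}$, a flat plateau far away from the support of $R$. So for a short time the equation $u_t=|Du|^2+R(x,I(t))$ behaves like $u_t=|Du|^2$ near that plateau, and the plateau is preserved: the supremum stays $0$ regardless of what happens near the origin. The freedom will come from the choice of the function $I(t)$ (and hence the bump of $u$ near $x=0$), which is constrained only by $I(0)=0$, $I\ge 0$, $I$ nondecreasing and continuous --- an infinite-dimensional family.

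Concretely, first I would fix an arbitrary admissible $I(t)$ as in the statement and let $u=u_I$ be the unique bounded uniformly continuous viscosity solution of \eqref{comp} with that $I$ (existence and uniqueness from the discussion following \eqref{alter}). By finite speed of propagation for first-order equations (or directly by a comparison argument against spatially constant supersolutions), for $T$ small enough the influence of the reaction region $\{|x|\le\tfrac12\}$ cannot reach the set $\{|x|\ge 2-\eta\}$, so on that set $u$ solves $u_t=|Du|^2$ with data $u_0\equiv 0$ there near infinity; since $u_0\le 0$ everywhere, the comparison principle gives $u\le 0$ on $\R^n\times[0,T]$, while on the flat part $u_0\equiv 0$ yields $u\equiv 0$ on a slightly smaller far-field set for small $t$. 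Hence $\sup_{\R^n}u(\cdot,t)=0$ for all $t\in[0,T]$, i.e. the constraint holds automatically for \emph{every} admissible $I$.

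Next I would check that distinct choices of $I$ give genuinely distinct solutions. Near the origin, where $R(x,I)=(1-|x|)(1-I(t))>0$ for $I(t)<1$ and small $t$, one has $u_t\ge R(x,I(t))>0$ pointwise in the viscosity sense, so $u(0,t)$ is strictly increasing and its growth rate is governed by $1-I(t)$; comparing two admissible functions $I_1\not\equiv I_2$ with, say, $I_1\le I_2$ and $I_1(t_0)<I_2(t_0)$ for some $t_0$, a comparison/Gronwall argument in the spirit of Proposition \ref{prop1} shows $u_{I_1}(0,t_0)\ne u_{I_2}(0,t_0)$ for $T$ small. Since the family of admissible $I$ is infinite (e.g. $I_\lambda(t)=\lambda t$ for $\lambda\in[0,1]$), this produces infinitely many solution pairs, and in particular infinitely many solutions $u$ of \eqref{comp} obeying the constraint.

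The main obstacle is the finite-speed-of-propagation / localization step: one must justify rigorously that for short time the solution on $\{|x|\ge 2-\eta\}$ is unaffected by the reaction bump and stays identically $0$ on a plateau, despite working with merely Lipschitz viscosity solutions of a Hamilton--Jacobi equation with a spatially discontinuous-in-gradient-scale source. I would handle this by building explicit barriers: a supersolution of the form $0$ (using $R\le$ a controlled constant only near the origin and cutting off with $C\sqrt{1+|x|^2}$ as in the Section 3 arguments) to force $u\le 0$ globally, and a subsolution equal to $0$ on a retracting far-field region plus a cone correction near $|x|=2$, to force $u\equiv 0$ there for $t$ small. The remaining steps --- verifying $u_0$ is Lipschitz (it is, being $C^1$ and eventually constant), that $R$ satisfies the standing hypotheses on the relevant range, and the strict-monotonicity-in-$t$ estimate near $0$ --- are routine.
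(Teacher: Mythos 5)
Your overall strategy matches the paper's: fix an arbitrary admissible $I$, take the unique solution $u_I$ of \eqref{comp}, and argue that for $T$ small the constraint $\sup_{\R^n}u_I(\cdot,t)=0$ holds automatically, so every admissible $I$ produces a solution pair. The gap is in the mechanism you offer for the upper half of the constraint, $u_I\le 0$. Neither the zero function nor any spatially constant function is a supersolution of \eqref{comp}: for $|x|\le\frac12$ and $I(t)<1$ one has $R(x,I(t))=(1-|x|)(1-I(t))\ge\frac12(1-I(t))>0$, so $0$ fails the supersolution inequality at the origin, and a constant-in-$x$ supersolution must grow like $\int_0^t\max_x R(x,I(s))\,ds$, which only gives $u\le Ct$ and does not exclude $\sup u>0$; adding a cutoff $\beta\sqrt{1+|x|^2}$ (the Section~3 device) supplies no time derivative and cannot dominate the strictly positive source near $x=0$. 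Finite speed of propagation does handle the lower half (the far-field plateau stays $\equiv 0$ for short time, hence $\sup u\ge 0$), but the upper half is precisely the quantitative statement that the interior bump, starting at level $-\frac14$ at $|x|=1$ and being pushed up by the source, cannot reach $0$ before time $T$. This needs a time-dependent, spatially varying barrier, which is exactly what the paper constructs: the supersolution $v_c$ whose central plateau starts at $-\frac14$, rises at rate $c\ge 1-I(t)=\max_x R(x,I(t))$ and spreads outward, together with the subsolution $w_c$ (flat far field at $0$, sinking center), so that $w_c\le u\le v_c$ by comparison and $\sup_{\R^n} w_c(\cdot,t)=\sup_{\R^n} v_c(\cdot,t)=0$ for $t$ small. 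Your closing remark about ``explicit barriers'' points in the right direction, but the barrier you describe is not a supersolution, and no quantitative reason is given why the interior maximum stays strictly negative on $[0,T]$.

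A secondary issue: to distinguish $u_{I_1}$ from $u_{I_2}$ you invoke a Gronwall argument ``in the spirit of Proposition \ref{prop1}'', but that proposition only bounds $\|u_{I_1}-u_{I_2}\|_{L^\infty}$ from above in terms of $|I_1-I_2|$; an upper bound can never show two solutions differ. Fortunately this step is not needed: the unknown of the constrained problem is the pair $(u,I)$, so distinct admissible $I$'s (e.g.\ $I_\lambda(t)=\lambda t$) already yield infinitely many distinct solution pairs, which is how the paper concludes. If you do want the $u$'s themselves to differ, you would need a lower bound, e.g.\ via the optimal-control (Hopf--Lax type) representation available since $H(p)=|p|^2$ is convex, applied to ordered pairs $I_1\le I_2$; that is an additional argument, not Proposition \ref{prop1}.
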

\begin{proof}
Let $I(t)$ be a continuous nondecreasing function such that $I(0)=0$ and $c>0$ satisfy $c \geq 1-I(t)$. Also we denote a unique viscosity solution of (\ref{comp}) by $u(x,t)$.  Now let us define $v_c(x,t)$ as
\begin{equation*}
 v_c(x,t)=\begin{cases}
 0 & |x| \geq 2,\\
 -(\frac{x-2}{2})^2  & 1+ct \leq x \leq 2,\\
  -\frac{1}{4}+ct & |x| \leq 1+ct,\\
  -(\frac{x+2}{2})^2 & -2\leq x \leq -1-ct,
 \end{cases}
\end{equation*}
and $w_c(x,t)$ as
\begin{equation*}
 w_c(x,t)=\begin{cases}
 0 & |x| \geq 2,\\
 -(\frac{x-2}{2})^2  & 2-\frac{\sqrt 3}{2}-ct \leq x \leq 2,\\
  -\frac{3}{4}-ct & |x| \leq 2-\sqrt 3-ct,\\
  -(\frac{x+2}{2})^2 & -2\leq x \leq -(2-\sqrt 3)+ct.
\end{cases}
\end{equation*}
It is straightforward to check $w_c$ is a viscosity subsolution to (\ref{comp}) since there is no test function touching from above at $|x|=1+ct$. On the other hand, as (\ref{comp}) is a concave Hamilton-Jacobi equation and a Lipschitz continuous function $v_c$ solves (\ref{comp}) almost everywhere sence. By Proposition 7.25 in \cite{Book}, $v_c$ is a viscosity supersolution. Therefore, we have
\begin{equation*}
w_c(x,0)\leq u(x,0)\leq v_c(x,0).
\end{equation*}
It follows that
\begin{equation*}
w_c(x,t)\leq u(x,t)\leq v_c(x,t)
\end{equation*}
by comparison principle. Moreover, for $T>0$ small,
\begin{equation*}
\sup_{\R^n} v_c(\cdot,t)=\sup_{ \R^n} w_c(\cdot,t)=0 \text{ for } 0\leq t \leq T.
\end{equation*}
Therefore, $u$ satisfies the constraint condition (\ref{const}). As we can repeat that process for any choice of $I(t)$ and $c$, infinitely many solution pairs $(u,I)$ are generated.
\end{proof}

\section{Conclusion}
We presented a new way of building a viscosity solution of Hamilton-Jacobi equation with an unknown function $I(t)$ and a supremum constraint via a fixed point argument. We also provided that when $R(x,I)$ is separable in $x$ and $t$, the solution is unique for any nonnegative, locally Lipschitz continuous Hamiltonian $H(p)$ satisfying $H(0)=0$. Here, we do not need convexity of the Hamiltonian. On the other hand, many solutions can be generated when the reaction $R(x,I)$ fails to strictly decreasing with respect to resource, $I$. Up to now, uniqueness of a solution pair $(u,I)$ corresponding to a general reaction $R(x,I)$ is still open. In the recent work by Mirrahimi and Roquejoffre \cite{Uniqueness}, it was proved that the solution is unique under restrictive assumptions that Hamiltonian and initial condition are uniformly concave and satisfy some further structural assumptions using optimal control formulation. We plan to investigate this matter in the near future.

\begin{thebibliography}{30} 
\bibitem{lip}
S. Armstrong, H. V. Tran,
\textit{Viscosity solutions of general viscous Hamilton–Jacobi equations},
Mathematische Annalen. 361 (2014), 647-687.

\bibitem{Guide}
G. Barles,
\textit{Discontinuous viscosity solutions of first-order Hamilton-Jacobi equations: a guided visit},
Nonlinear Analysis: Theory, Methods \& Appl. 20 (1999), no. 9, 1123-1134.

\bibitem{General.C}
G. Barles, S. Mirrahimi, B. Perthame, 
\textit{Concentration in Lotka-Volterra parabolic or integral equations: a general convergence result},
Methods Appl. Anal. 16 (2009), no. 3, pp.321-340.

\bibitem{C.Constraint}
G. Barles, B. Perthame, 
\textit{Concentrations and constrained Hamilton-Jacobi equations arising in adaptive dynamics},
Contemporary Math. 439 (2007), 57-68. 

\bibitem{Convergence}
O. Diekmann, P.-E. Jabin, S. Mischler, B. Perthame,
\textit{The dynamics of adaptation : an illuminating example and a Hamilton-Jacobi approach}, 
Th. Pop. Biol. 67 (2005), no. 4, 257-271.

\bibitem{estimate}
M. G. Crandall, L. C. Evans, P.-L. Lions,
\textit{Some properties of viscosity solutions of Hamilton-Jacobi equations},
Transaction of American Mathematical Society, 282 (1984), no. 2, 487-502.

\bibitem{Darwin1}
O. Diekmann, 
\textit{Beginner's guide to adaptive dynamics}, 
Banach Center Publications 63 (2004), 47-86.

\bibitem{Darwin2}
S. A. H. Geritz, E. Kisdi, G. M\'eszena, J. A. J. Metz,
\textit{Dynamics of adaptation and evolutionary branching},
Phy. Rev. Letters 78 (1997), 2024-2027.

\bibitem{Darwin3}
S. A. H. Geritz, E. Kisdi, G. M\'eszena, J. A. J. Metz,
\textit{Evolutionary singular strategies and the adaptive growth and branching of the evolutionary tree},
Evolutionary Ecology 12 (1998), 35-57.

\bibitem{Darwin4}
S. A. H. Geritz, E. Kisdi, , M. Gyllenberg, F. J. Jacobs, J. A. J. Metz
\textit{Link between population dynamics and dynamics of Darwinian evolution},
Phy. Rev. Letters 95 (2005), no. 7.

\bibitem{Book}
N. Q. Le, H. Mitake, H. V. Tran, 
\textit{Dynamical and Geometric Aspects of Hamilton-Jacobi and Linearized Monge-Ampere Equations},
Lecture notes in Mathematics 2183 (2016).

\bibitem{Uniqueness}
S. Mirahimi, J.-M. Roquejoffre,
\textit{A class of Hamilton-Jacobi equations with constraint: Uniqueness and constructive approach},
J. of Differential Equations 250.5 (2016), 4717-4738.

\bibitem{Dirac.C}
B. Perthame, G. Barles, 
\textit{Dirac concentrations in Lotka-Volterra parabolic PDEs},
Indiana Univ. Math., J. 57 (2008), no. 7, 3275-3301.
\end {thebibliography}
\end{document}